\theoremstyle{plain}
\newtheorem{proposition}{Proposition}[section]
\newtheorem{lemma}[proposition]{Lemma}
\newtheorem{thm}[proposition]{Theorem}
\newtheorem{corollary}[proposition]{Corollary}
\theoremstyle{definition}
\newtheorem{defi}{Definition}[section]
\newtheorem{ex}{Example}[section]
\newcommand{\tn}[0]{\otimes}
\newcommand{\ct}[1]{\mathcal{#1}}
\newcommand{\ov}[1]{\overline{#1}}
\newcommand{\fr}[1]{\mathfrak{#1}}
\newcommand{\SL}[0]{\mathrm{SupLat}}
\newcommand{\id}[0]{\mathrm{id}}
\newcommand{\Rr}[0]{\mathcal{R}}
\newcommand{\st}[1]{\lbrace #1 \rbrace}
\newcommand{\Pl}[1]{\mathcal{P}(#1)}
\newcommand{\tht}[4]{\prescript{\Theta_{#1}({#2},{#3})}{}{{#4}}}
\begin{document}

\title{Drinfeld Twists on Skew Braces}
\author{Aryan Ghobadi \\ \small{Queen Mary University of London }\\\small{ School of Mathematics, Mile End Road}\\\small{ London E1 4NS, UK }\\ \small{Email: a.ghobadi@qmul.ac.uk}}
\date{}

\maketitle
\begin{abstract}
We introduce the notion of Drinfeld twists for both set-theoretical YBE solutions and skew braces. We give examples of such twists and show that all twists between skew braces come from families of isomorphisms between their additive groups. We then describe the relation between these definitions and co-twists on FRT-type Hopf algebras in the category $\SL$, and prove that any co-twist on a co-quasitriangular Hopf algebra in $\SL$ induces a Drinfeld twist on its remnant skew brace. We go on to classify co-twists on bicrossproduct Hopf algebras coming from groups with unique factorisation, and the twists which they induce on skew braces.
\end{abstract}

\begin{footnotesize}2020\textit{ Mathematics Subject Classification}: 16T25, 18M15, 16T99, 17B37
\\\textit{Keywords}: Drinfeld twist, Hopf algebra, join-semilattice, matched pairs of groups, skew braces, Yang-Baxter equation \end{footnotesize}

\section{Introduction}\label{S:Intro}
Set-theoretical solutions of the Yang-Baxter equation (YBE) and their classification have attracted a lot of attention in recent years \cite{bachiller2018solutions,doikou2020set,doikou2021set,guarnieri2017skew,smoktunowicz2018skew}. A prominent role in this theory is played by \emph{skew braces}, which are sets with two compatible group structures \cite{guarnieri2017skew}. Each set-theoretical YBE solution has a corresponding universal skew brace, while any skew brace comes equipped with a set-theoretical YBE solution. In fact, any skew brace is equivalent to a group with a compatible YBE solution on it, called a braiding operator. In the recent work \cite{ghobadi2020skew}, we showed that skew braces are in fact remainders of co-quasitriangular Hopf algebras (CQHA) in the category $\SL$, which has complete lattices as objects and join-preserving maps as morphisms. Given any CQHA in this category we can construct a group called the remnant of the Hopf algebra, which comes equipped with a braiding operator. Conversely, any group with a braiding operator appears as the remnant of a CQHA in $\SL$, but not necessarily of a unique one. The first fruits of this theory already appeared in \cite{ghobadi2020skew}, where the correspondence between skew braces and groups with braiding operators was described as the shadow of the theory of transmutation for CQHAs \cite{majid1993transmutation}. In addition to explaining aspects of the theory of skew braces, this machinery enables us to apply standard methods in the theory of Hopf algebras to produce new skew braces and set-theoretical YBE solutions. In this work, we will introduce the theory of Drinfeld twists for skew braces using this machinery. 
 
Drinfeld twists were originally introduced in \cite{drinfeld1989quasi,drinfeld1989quasi2} in the study of quasi-Hopf algebras. Given a Drinfeld twist on an ordinary Hopf algebra, we can produce a new Hopf algebra structure on the same space by conjugating the coalgebra structure by the twist element, see \cite{majid2000foundations}. Moreover, if the Hopf algebra is equipped with a quasitriangular structure, the twisted Hopf algebra also obtains a quasitriangular structure. The dual of this theory also works and is utilised under the name of Drinfeld co-twists \cite{majid1992tannaka}. By applying this theory in the category of $\SL$, we see that any co-twist on a CQHA in $\SL$ produces a new skew brace structure on its remnant. The study of co-twist on the FRT-type Hopf algebras constructed in \cite{ghobadi2020skew}, then reveals how we can formulate a novel notion of twists directly on skew braces. 

In Section~\ref{S:DriTwis}, we introduce the notion of Drinfeld twists for skew braces, Definitions \ref{D:TwistSkw}, and show directly how such a twist induces new skew brace structure on the same set, Theorem \ref{T:TwistSkw}. The analogues definition and result for twists on set-theoretical YBE solutions also appear in Definition \ref{D:TwistYBE} and Theorem \ref{T:TwistYBE}. In Example \ref{E:Doikou}, we show that any involutive YBE solution can be obtained by a twist on the permutation solution, while in Example \ref{E:Theta} we show that any skew brace $(G,.,\star)$ can be obtained by a twist on the trivial skew brace, $(G,\star ,\star)$, defined by its additive group $(G,\star)$. We also comment on the difference between Drinfeld twists on the universal skew brace of a YBE solution and twists on the solution itself in Example \ref{E:Universal}. Beyond examples, in Theorem \ref{T:YBEGroupoid}, we show how to compose and invert twists, and how one can view a twist on a YBE solution as a morphism going from the original solution to the new solution obtained by twisting. The same techniques follow for twists on skew braces, Theorem \ref{T:SkwGroupoid}, and we obtain a groupoid which has skew brace as its objects and Drinfeld twists as morphisms. This result together with Example \ref{E:Theta} allow us to translate the problem of whether two skew braces are related via Drinfeld twists fully in terms of their additive groups, Theorem \ref{T:GrpTheoretic}. Finally, in Corollary \ref{C:Decomposition} we conclude that twists between a pair of skew braces $(G,.,\star)$ and $(G,.',\star')$ correspond to families of group isomorphisms $\lbrace f_{g}:(G,\star)\rightarrow (G,\star') \rbrace_{g\in G}$ satisfying $f_{g}(g)=g$. 

While the theory of Drinfeld twists on skew braces can be understood independently, it is inspired by the Hopf algebraic point of view in $\SL$. Our main results extending the work in \cite{ghobadi2020skew} appear in Section \ref{S:SL}. As mentioned earlier, studying co-twists on the FRT-type algebras constructed in \cite{ghobadi2020skew}, reveal the definition of twists on YBE solutions and skew braces directly, Theorem \ref{T:TwistHw}. More generally, we show that a Drinfeld co-twist on any arbitrary CQHA in $\SL$ induces a Drinfeld twist on its remnant skew brace, Theorem \ref{T:AnyCQHA}. In Section \ref{S:Groups}, we classify co-twists on dualizable CQHAs in $\SL$, which correspond to matched pairs of groups \cite{takeuchi1981matched}, and thereby describe the induced twists coming from these structures, Theorem \ref{T:TwistG+G-}. Since any skew braces gives rise to such a pair of groups in a trivial way, we obtain an explicit family of twists for any given skew brace in Corollary \ref{C:G-}. We conclude the manuscript by presenting a number of open problems regarding the theory of Drinfeld twists on skew braces in Section \ref{S:Final}.

Lastly, let us comment on the similarities of our work with other sources which study linear YBE solutions. In \cite{kulish2000twisting}, the authors present the notion of twist on linear YBE solutions, by studying Drinfeld twists on the FRT bialgebras. Our definition of Drinfeld twists on set-theoretical YBE solutions and its relation to the $\SL$-FRT algebras can be seen as the set-theoretical version of their work. In particular, the proof of Theorem \ref{T:TwistYBE} is practically the same as Proposition 1 in \cite{kulish2000twisting}. However, the introduction of twists on skew braces is the truly novel portion of our work. A discussion relating Drinfeld twists and braces also appears \cite{doikou2020set,doikou2021set}, where the authors study Drinfeld twists on linear FRT-type algebras coming from the linearisation of involutive YBE solutions. We are able to rephrase part of the work done in \cite{doikou2021set} in terms of our notion of Drinfeld twists in Example \ref{E:Doikou}.

\textbf{Notation.} Throughout this work, $X$ will always denote a set and indices on a map such as $r_{ij}: X^{3}\rightarrow X^{3}$ will denote the application of $r$ to the $i$ and $j$-th components of $X^{3}$ e.g. $r_{23}= \id_{X} \times r$. We use the standard notation $r(x,y)=(\sigma_{x}(y),\gamma_{y}(x))$ for maps $\sigma_{x},\gamma_{y}:X\rightarrow X$. The flip map on the set $X^{2}$ which sends a pair $(x,y)$ to $(y,x)$ will always be denoted by $\fr{fl}_{X}$. For any set $S$, $\id_{S}$ denotes the identity map. By a groupoid we mean a category where all morphisms are invertible. The triple $(G,m,e)$ will always denote a group, where $e\in G$ is the identity element and $m$ the multiplication operation. We will use $m$ and $.$ interchangably in calculations. The term skew brace will always refer to skew left brace as defined in \cite{guarnieri2017skew}. In Sections \ref{S:SL} and \ref{S:Groups} we adapt the notation of \cite{ghobadi2020skew} and \cite{LYZ2}, respectively. 

\textbf{Acknowledgements.} The author would like to thank Shahn Majid for his helpful comments on an earlier draft of this work.
\section{Twists on Skew Braces}\label{S:DriTwis}
In this section, we introduce the notion of Drinfeld twists on set-theoretical YBE solutions and skew braces. While twists on YBE solutions first appeared in \cite{kulish2000twisting}, the definition of twists on skew braces, Definition \ref{D:TwistSkw}, is new and its relation with Drinfeld twists on Hopf algebras will be explored in Section \ref{S:SL}. In this section, we show how these twists can be viewed as morphisms between YBE solutions and skew braces, respectively, and determine when two skew braces can be related via such twists. Note that we only consider the braid form of the Yang-Baxter equation and by a set-theoretical YBE solution we mean a set $X$ and an \emph{invertible} map $r:X^{2}\rightarrow X^{2}$ satisfying $r_{23} r_{12} r_{23}= r_{12} r_{23} r_{12}$. 
\begin{defi}\label{D:TwistYBE} A \emph{Drinfeld twist} on a set-theoretical YBE solution $(X,r)$ consists of a triple of invertible maps $F:X^{2}\rightarrow X^{2}$ and $\Phi,\Psi:X^{3}\rightarrow X^{3}$ satisfying
\begin{align}
\label{Eq:T1} (F\times \id_{X})\Psi &= (\id_{X}\times F) \Phi \tag{T1}
\\\label{Eq:T2} \Phi(\id_{X}\times r) &=(\id_{X}\times r)\Phi \tag{T2}
\\\label{Eq:T3} \Psi(r\times\id_{X} ) &= (r\times \id_{X})\Psi\tag{T3}
\end{align}
\end{defi}
The notion of twists on YBE solutions has previously appeared in \cite{kulish2000twisting}, but is written in terms of $(F^{-1},\Phi^{-1},\Psi^{-1})$. As observed in \cite{kulish2000twisting}, the triple $(F,\Phi,\Psi)$ is determined uniquely by the pair $(F,G)$, where $G= F_{12}\Psi=F_{23}\Phi$ and we could re-write the conditions of Definition \ref{D:TwistYBE} for this pair, but in application this will only lengthen our proofs.

\begin{thm}\label{T:TwistYBE} Given a Drinfeld twist $(F,\Phi,\Psi)$ on a set-theoretical YBE solution $(X,r)$, we obtain a new YBE solution, $(X,FrF^{-1})$.
\end{thm}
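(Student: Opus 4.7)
The plan is to introduce the auxiliary map $G = F_{12}\Psi = F_{23}\Phi$ (these two expressions coincide by (T1), rewritten in subscript notation) and show that conjugation by $G$ produces the twisted solution $r' = FrF^{-1}$ at both positions $12$ and $23$. Once this is established, the braid relation for $r'$ follows by direct conjugation of the braid relation for $r$.

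More precisely, I would first rewrite (T1), (T2), (T3) in subscript notation as
\begin{align*}
F_{12}\Psi &= F_{23}\Phi =: G, \\
\Phi\, r_{23} &= r_{23}\,\Phi, \\
\Psi\, r_{12} &= r_{12}\,\Psi.
\end{align*}
From $\Phi = F_{23}^{-1}G$ and (T2), I would deduce $F_{23}^{-1} G\, r_{23}\, G^{-1} F_{23} = r_{23}$, i.e.\ $G\, r_{23}\, G^{-1} = F_{23}\, r_{23}\, F_{23}^{-1} = r'_{23}$. Symmetrically, using $\Psi = F_{12}^{-1}G$ and (T3), I would deduce $G\, r_{12}\, G^{-1} = F_{12}\, r_{12}\, F_{12}^{-1} = r'_{12}$.

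With these two identities in hand, the verification of the braid relation is a one-line calculation:
\begin{align*}
r'_{23}\, r'_{12}\, r'_{23} &= G\, r_{23}\, G^{-1} G\, r_{12}\, G^{-1} G\, r_{23}\, G^{-1} \\
&= G\, r_{23}\, r_{12}\, r_{23}\, G^{-1} \\
&= G\, r_{12}\, r_{23}\, r_{12}\, G^{-1} = r'_{12}\, r'_{23}\, r'_{12},
\end{align*}
where the third equality uses that $r$ itself satisfies the braid equation. Invertibility of $r'$ is immediate since $F$ and $r$ are invertible.

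There is no genuine obstacle here; the only subtlety worth flagging is the observation that although (T2) and (T3) involve $\Phi$ and $\Psi$ acting at different positions, the shared value $G$ allows both $r'_{12}$ and $r'_{23}$ to be expressed as conjugation by the \emph{same} global operator $G$, which is exactly what makes the braid identity transport cleanly. This is essentially the same argument as Proposition~1 of \cite{kulish2000twisting}, and as the authors note in the paragraph preceding the theorem, it can be thought of as a reformulation in terms of the pair $(F,G)$ rather than the triple $(F,\Phi,\Psi)$.
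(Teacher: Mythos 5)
Your proof is correct and takes essentially the same route as the paper's: the paper's direct expansion also reduces both sides of the braid relation for $FrF^{-1}$ to $F_{23}\Phi\,(r_{23}r_{12}r_{23})\,\Phi^{-1}F_{23}^{-1}$ and $F_{12}\Psi\,(r_{12}r_{23}r_{12})\,\Psi^{-1}F_{12}^{-1}$, which are exactly the conjugates of the two sides of the braid relation for $r$ by your operator $G$. Packaging the computation through the single map $G=F_{12}\Psi=F_{23}\Phi$ is the $(F,G)$ reformulation the paper alludes to just before the theorem; it makes the transport of the braid identity slightly cleaner but is not a different argument.
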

\begin{proof} For this proof we observe the simple fact that \eqref{Eq:T1} implies $F_{23}^{-1}F_{12}= \Phi\Psi^{-1}$. We now show that $R_{23} R_{12} R_{23}= R_{12} R_{23} R_{12}$ holds for $R=FrF^{-1} $ by simplifying both sides: 
\begin{align*}
R_{23} R_{12} R_{23}&=F_{23}r_{23}F_{23}^{-1}F_{12}r_{12}F_{12}^{-1}F_{23}r_{23}F_{23}^{-1} = F_{23}r_{23}\Phi\Psi^{-1}r_{12}F_{12}^{-1}F_{23}r_{23}F_{23}^{-1}
\\&= F_{23}\Phi r_{23} r_{12}\Psi^{-1}F_{12}^{-1}F_{23}r_{23}F_{23}^{-1}= F_{23}\Phi r_{23} r_{12}\Phi^{-1} r_{23}F_{23}^{-1}= F_{23}\Phi r_{23} r_{12} r_{23}\Phi^{-1} F_{23}^{-1}
\\ R_{12} R_{23} R_{12}&=F_{12}r_{12}F_{12}^{-1}F_{23}r_{23}F_{23}^{-1}F_{12}r_{12}F_{12}^{-1}= F_{12}r_{12}F_{12}^{-1}F_{23}r_{23}\Phi\Psi^{-1}r_{12}F_{12}^{-1}
 \\&= F_{12}r_{12}F_{12}^{-1}F_{23}\Phi r_{23} r_{12}\Psi^{-1}F_{12}^{-1}= F_{12}r_{12}\Psi r_{23} r_{12}\Psi^{-1}F_{12}^{-1}=F_{12}\Psi r_{12} r_{23} r_{12}\Psi^{-1}F_{12}^{-1}
\end{align*}
Since $r$ satisfy the YBE and \eqref{Eq:T1} is equivalent to $F_{12}\Psi = F_{23}\Phi$, the two sides are equal. 
\end{proof}

Observe that $FrF^{-1}$ is involutive i.e. satisfies $(FrF^{-1})^{2}=\id_{X^{2}}$ if and only if $r$ is involutive. In fact,  the statements $r^{n}= \id_{X^{2}}$ and $(FrF^{-1})^{n}=\id_{X^{2}}$ are equivalent for any natural number $n\in \mathbb{N}$. Recall, that a YBE solution $r$ is said to be non-degenerate if $\sigma_{x},\gamma_{y}$ are bijections for all $x,y\in X$. For $FrF^{-1}$ to also be non-degenerate we need $f_{x}, f'_{y}$ to be bijections, where $F(x,y)=(f_{y}(x),f'_{x}(y))$. 

We can view a Drinfeld twist $(F,\Phi,\Psi)$ on a solution $(X,r)$ as an ``arrow" from the solution $(X,r)$ to $(X,Fr^{-1}F)$. From this perspective, we obtain a category which has set-theoretical YBE solution as objects and Drinfeld twists as morphisms:
\begin{thm}\label{T:YBEGroupoid} The class of set-theoretical YBE solutions, together with Drinfeld twists viewed as morphisms between solution form a groupoid. 
\end{thm}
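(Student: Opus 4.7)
The plan is to equip the class of set-theoretical YBE solutions with identity morphisms, a composition law, and inverses, and then verify the groupoid axioms. For any solution $(X,r)$, the triple $(\id_{X^{2}}, \id_{X^{3}}, \id_{X^{3}})$ manifestly satisfies \eqref{Eq:T1}--\eqref{Eq:T3} and will serve as the identity at $(X,r)$.

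For composition, let $(F,\Phi,\Psi):(X,r)\to(X,r')$ and $(F',\Phi',\Psi'):(X,r')\to(X,r'')$ be twists, so by Theorem \ref{T:TwistYBE} we have $r'=FrF^{-1}$ and $r''=(F'F)r(F'F)^{-1}$. Because conditions \eqref{Eq:T2}--\eqref{Eq:T3} for the second triple are stated relative to $r'$ rather than $r$, the composite must conjugate $\Phi',\Psi'$ back to the level of $r$. The natural definition is then
\[
\bigl(F'F,\ F_{23}^{-1}\Phi'F_{23}\Phi,\ F_{12}^{-1}\Psi'F_{12}\Psi \bigr).
\]
Checking \eqref{Eq:T2} for this composite reduces, via $F_{23}r_{23}=r'_{23}F_{23}$ together with \eqref{Eq:T2} for $\Phi$ (relative to $r$) and for $\Phi'$ (relative to $r'$), to the desired commutation with $r_{23}$; \eqref{Eq:T3} is symmetric. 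Condition \eqref{Eq:T1} for the composite collapses after cancelling $F_{12}F_{12}^{-1}$ to $F'_{12}\Psi'F_{12}\Psi$; applying \eqref{Eq:T1} first to $(F',\Phi',\Psi')$ and then to $(F,\Phi,\Psi)$ rewrites this as $F'_{23}\Phi'F_{23}\Phi=(F'F)_{23}\bigl(F_{23}^{-1}\Phi'F_{23}\Phi\bigr)$, as required.

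For the inverse of $(F,\Phi,\Psi):(X,r)\to(X,r')$, the plan is to take
\[
\bigl(F^{-1},\ F_{23}\Phi^{-1}F_{23}^{-1},\ F_{12}\Psi^{-1}F_{12}^{-1} \bigr),
\]
whose twist axioms are verified by the same style of conjugation, with \eqref{Eq:T1} coming from the inverted form $\Psi^{-1}F_{12}^{-1}=\Phi^{-1}F_{23}^{-1}$ of the original \eqref{Eq:T1}. Plugging this candidate into the composition formula on either side collapses all factors pairwise to give the identity triple. Associativity is then verified by expanding both parenthesisations of three composable twists using the formula above: the nested conjugations by $F_{i,12}$ and $F_{i,23}$ telescope so that each entry of the resulting triple is manifestly independent of the bracketing.

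The main obstacle is purely bookkeeping: one must keep careful track of which solution each condition is stated with respect to, so that the conjugating factors $F_{12}$, $F_{23}$ land in exactly the right positions when one composes or inverts. Once the composition and inversion formulas above are fixed, every remaining verification is a routine rearrangement based on the three defining identities of Definition \ref{D:TwistYBE}.
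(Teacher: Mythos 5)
Your proposal is correct and follows essentially the same route as the paper: the same identity triple, the same composition formula $(F'F,\ F_{23}^{-1}\Phi'F_{23}\Phi,\ F_{12}^{-1}\Psi'F_{12}\Psi)$, and the same inverse $(F^{-1},\ F_{23}\Phi^{-1}F_{23}^{-1},\ F_{12}\Psi^{-1}F_{12}^{-1})$, with the axioms verified by the identical conjugation arguments. Your explicit mention of associativity is a minor addition the paper leaves implicit, but it does not change the argument.
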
 
\begin{proof} Each YBE solution has an associated trivial twist $(\id_{X^{2}},\id_{X^{3}},\id_{X^{3}})$ which acts as an identity morphism. For composition of twists, let $(F,\Phi,\Psi)$ be a twist on $(X,r)$ and $(G,\phi,\psi)$ be a twist on $(X,FrF^{-1})$. We define the composition of these twists to be the triple 
$$(GF, F_{23}^{-1}\phi F_{23}\Phi, F_{12}^{-1}\psi F_{12}\Psi)$$ 
It is straightforward to check that this triple defines a twist on $r$:
\begin{align*}
\text{(T1): }&\quad (GF)_{23}F_{23}^{-1}\phi F_{23}\Phi = G_{23}F_{23}F_{23}^{-1}\phi F_{12}\Psi = G_{12}\psi F_{12}\Psi= (GF)_{12}F_{12}^{-1}\psi F_{12}\Psi
\\ \text{(T2): }&\quad r_{23}F_{23}^{-1}\phi F_{23}\Phi= F_{23}^{-1}(F_{23}r_{23}F_{23}^{-1})\phi F_{23}\Phi= F_{23}^{-1}\phi(F_{23}r_{23}F_{23}^{-1}) F_{23}\Phi= F_{23}^{-1}\phi F_{23}\Phi r_{23}
\end{align*}
A symmetric argument demonstrates \eqref{Eq:T3}. Finally, we observe that Drinfeld twists are invertible. If $(X,R)$ is obtained by a twist $(F,\Psi, \Phi)$ on $(X,r)$, then the triple $(F^{-1}, F_{23}\Phi^{-1} F^{-1}_{23},F_{12}\Psi^{-1} F^{-1}_{12})$ form a twist on $R$, which recovers $r= F^{-1}RF$ by Theorem \ref{T:TwistYBE}. One can easily check that the latter triple becomes the inverse of $(F,\Psi, \Phi)$, with respect to the composition which we have defined. 
\end{proof}
By Theorem \ref{T:YBEGroupoid}, we can say two YBE solutions are \emph{related} by twists, since if one is obtained as a twist of the other, than the converse statement also holds true. Note that Drinfled twists only send a YBE solution on a set $X$ to another solution on the same underlying set $X$. The important conclusion of Theorem \ref{T:YBEGroupoid} is that any solution in a connected component of the described groupoid can be obtained as a twist of another solution in the same component. In other words, if solutions $r_{1}$ and $r_{2}$ on a set $X$ are related to a solution $(X,r_{3})$ via twists, then $r_{1}$ and $r_{2}$ are also related via Drinfeld twists. 
\begin{ex}\label{E:Doikou} In \cite{doikou2021set}, it is stated that any involutive YBE solution can be viewed as a \emph{quasi-admissible twist} of the permutation solution. Although \cite{doikou2021set} is discussing linear Hopf algebras built on these solutions, the proof of Proposition 3.13 of \cite{doikou2021set} boils down to the following observation: if $r(x,y)= (\sigma_{x}(y),\gamma_{y}(x))$ satisfies the YBE we can introduce $F(x,y)=(x,\sigma_{x}(y))$, $\Phi (x,y,z)= (x,\sigma_{x}(y), \sigma_{\gamma_{y}(x)}(z))$ and $\Psi(x,y,z)=(x,y,\sigma_{x}(\sigma_{y}(z)))$, so that the triple $(F,\Phi,\Psi)$ define a Drinfeld twist on $(X,r)$. Conditions \eqref{Eq:T1}, \eqref{Eq:T2} and \eqref{Eq:T3}, all follow directly from $r$ being a YBE solution and the identities $\sigma_{\sigma_{x}(y)}(\sigma_{\gamma_{x}(y)}(z))= \sigma_{x}(\sigma_{y}(z))$ and $\sigma_{\gamma_{\sigma_{y}(z)}(x)}(\gamma_{z}(y))= \gamma_{\sigma_{x}(y)}(\sigma_{\gamma_{y}(x)}(z)) $. Consequently, by Theorem \ref{T:TwistYBE} we obtain a new YBE solution defined by $FrF^{-1}(x,y)= (y, \sigma_{y}(\gamma_{\sigma^{-1}_{x} (y)}(x)))$. Observe that $\sigma_{y}(\gamma_{\sigma^{-1}_{x} (y)}(x))$ appears in the first component of $r^{2} (x,\sigma^{-1}_{x}(y))$. Going back to the involutive setting considered in \cite{doikou2021set}, we have that $FrF^{-1}(x,y)= (y,x)$, since $r^{2}=\id_{X^{2}}$. Therefore, the permutation solution $(X,\fr{fl}_{X})$ is obtained from a twist on $(X,r)$, and by Theorem \ref{T:YBEGroupoid}, $(X,r)$ can be obtained by the inverse twist on $(X,\fr{fl}_{X})$. 
\end{ex}
By the above example, any involutive solution on $X$ is obtained as a twist of the permutation solution and by Theorem \ref{T:YBEGroupoid}, we can conclude the following result. 
\begin{corollary}\label{C:Inv} If $X$ is a set and $r_{1}$ and $r_{2}$ are involutive YBE solutions in $X$, then there exists a twist $(F,\Phi,\Psi)$ on $r_{1}$ such that $r_{2}=Fr_{1}F^{-1}$. 
\end{corollary}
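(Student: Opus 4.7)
The plan is to travel from $r_{1}$ to $r_{2}$ through the permutation solution $(X,\fr{fl}_{X})$, exploiting the fact that, by Example \ref{E:Doikou}, every involutive solution on $X$ lies in the same connected component of the groupoid of Theorem \ref{T:YBEGroupoid} as $(X,\fr{fl}_{X})$.

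Concretely, the first step is to apply Example \ref{E:Doikou} to $r_{1}$, producing a twist $T_{1} = (F_{1},\Phi_{1},\Psi_{1})$ on $(X,r_{1})$ with $F_{1} r_{1} F_{1}^{-1} = \fr{fl}_{X}$. The second step is to apply the same example to $r_{2}$, obtaining a twist $T_{2} = (F_{2},\Phi_{2},\Psi_{2})$ on $(X,r_{2})$ with $F_{2} r_{2} F_{2}^{-1} = \fr{fl}_{X}$. Using the groupoid structure supplied by Theorem \ref{T:YBEGroupoid}, I invert $T_{2}$ to obtain a twist $T_{2}^{-1}$ going from $(X,\fr{fl}_{X})$ to $(X,r_{2})$, and then compose with $T_{1}$ to produce the twist $T_{2}^{-1}\circ T_{1}$ on $(X,r_{1})$. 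By the way composition and inversion are defined in the proof of Theorem \ref{T:YBEGroupoid}, the underlying invertible map of this composite is $F := F_{2}^{-1}F_{1}$, which by construction satisfies $F r_{1} F^{-1} = r_{2}$, giving exactly the triple $(F,\Phi,\Psi)$ required in the statement.

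There is no real obstacle: the whole argument is a formal consequence of (a) Example \ref{E:Doikou}, which identifies $\fr{fl}_{X}$ as a common object in the involutive component, and (b) Theorem \ref{T:YBEGroupoid}, which supplies composites and inverses of twists. If desired, the accompanying $\Phi$ and $\Psi$ can be written down explicitly using the composition formula displayed in the proof of Theorem \ref{T:YBEGroupoid}, but this level of detail is unnecessary for an existence statement.
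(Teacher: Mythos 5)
Your proposal is correct and follows exactly the route the paper intends: the corollary is stated as an immediate consequence of Example \ref{E:Doikou} (every involutive solution is connected to $(X,\fr{fl}_{X})$ by a twist) together with the groupoid structure of Theorem \ref{T:YBEGroupoid}, and your composite $T_{2}^{-1}\circ T_{1}$ with underlying map $F=F_{2}^{-1}F_{1}$ is precisely that argument made explicit. The verification $Fr_{1}F^{-1}=F_{2}^{-1}F_{1}r_{1}F_{1}^{-1}F_{2}=F_{2}^{-1}\fr{fl}_{X}F_{2}=r_{2}$ checks out.
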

\begin{ex}\label{E:S4} Let us consider the non-involutive YBE solution from Example 3.4 of \cite{smoktunowicz2018skew}. We take $X=\lbrace 1, 2, 3, 4\rbrace $ and $r(x,y)= (\sigma (y), \gamma (x))$, where $\sigma= (12)$ and $\gamma = (34)$, where $(12),(34)\in S_{4}$ are 2-cycle permutations with standard notation. Consider $F(x,y) =(\sigma(x), \gamma (y))$ with $\Phi(x,y,z)= (\gamma (\sigma( x)),\sigma(y),\sigma (z))$ and $\Psi(x,y,z) = (\gamma(x), \gamma (y) , \gamma (\sigma( z)))$. The twisted YBE solution we obtain is $FrF^{-1}(x,y)= (\gamma(y),\sigma(x))$. This can be generalised to any YBE solution of the form $r(x,y)=(\sigma(x), \gamma (y))$, where bijections $\sigma$ and $\gamma$ commute and $\sigma^{2}=\gamma^{2}=\id_{X}$. 
\end{ex}

\begin{ex}\label{E:noninvLyuba} For any YBE solution $r(x,y)=(\sigma(y),\gamma(x) )$ we can introduce a twist in the style of Example \ref{E:Doikou}, $F(x,y)=(x,\kappa (y))$, where $\kappa$ is a bijection on $X$ which commutes with $\sigma $ and $\gamma$ and we define $\Phi (x,y,z)= (x,\kappa (y), \kappa (z))$ and $\Psi(x,y,z)=(x,y,\kappa(\kappa(z)))$. In the case of $(X,r)$ of Example \ref{E:S4}, $\kappa$ can be either $\sigma=(12)$ or $\gamma=(34)$, giving the twisted YBE solutions $(x,y)\mapsto (y, \sigma(\gamma (x))) $ and $(x,y)\mapsto (\sigma(\gamma (y)), x) $, respectively. 
\end{ex}

Now we briefly recall the definition of a group with braiding operator from \cite{LYZ}. A \emph{braiding operator} on a group $(G,m,e)$ is a map $r:G^{2} \rightarrow G^{2}$ satisfying 
\begin{align}
r(e,g)=(g,e), \quad  r(g&,e) =(e,g) \label{Eq:brd1}
\\rm_{12} = m_{23}&r_{12}r_{23}\label{Eq:brdOpr1}
\\rm_{23}=m_{12} &r_{23}r_{12}\label{Eq:brdOpr2}
\\ mr= m\label{Eq:brdcomm}
\end{align}
It follows by definition that $r$ is a non-degenerate solution of the YBE, Corollary 3 of \cite{LYZ}.
\begin{defi}\label{D:TwistSkw} A \emph{Drinfeld twist} on a group $(G,m,e)$ with a braiding operator $r$ consists of a Drinfeld twist $(F,\Phi,\Psi)$ on the underlying YBE solution $(G,r)$ satisfying the additional conditions 
\begin{align}
 \Psi ( x,y,e) = (x,y,e),  &\quad \Phi (e,x,y)= (e,x,y) \tag{G1}\label{Eq:G1}
\\ F(e,x) =(e,x), &\quad F(x,e)= (x,e) \tag{G2}\label{Eq:G2}
\\m_{23}\Phi(x,y,z) &=F(x,yz) \tag{G3} \label{Eq:G3}
\\m_{12}\Psi (x,y,z) &= F(xy,z) \tag{G4} \label{Eq:G4}
\end{align}
where $x,y,z\in G$. 
\end{defi}
Although this definition does not appear to be related to Drinfeld twists on Hopf algebras, the relation between the two will be clarified in Section \ref{S:SL}, by the study of Hopf algebras in $\SL$. 
\begin{lemma}\label{L:TwistSkw} If $(G,m,e)$ is a group with a braiding operator $r$, and $(F,\Phi,\Psi)$ is as in Definition \ref{D:TwistSkw}, it follows that 
\begin{align}
\Phi (x,y,e)=(F(x,y),e), \quad \Phi (x, e, y)= F_{13} (x,e,y)\label{Eq:L1}\tag{L1}
\\ \Psi (e,x,y) = (e,F(x,y)), \quad \Psi (x, e, y)= F_{13}(x,e,y) \label{Eq:L2}\tag{L2}
\end{align}
hold for any $x,y\in G$.
\end{lemma}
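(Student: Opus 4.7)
The strategy is to exploit the compatibility conditions \eqref{Eq:T1}--\eqref{Eq:T3} at triples with a single identity entry, combined with the boundary conditions \eqref{Eq:G1}--\eqref{Eq:G2} and the braiding-operator identities $r(e,g)=(g,e)$, $r(g,e)=(e,g)$. Invertibility of $F$ is the crucial ingredient that lets us pass from equalities of the form $F(\cdot,\cdot)=F(\text{known})$ to equalities of the arguments themselves; note that $\Phi$ and $\Psi$ are themselves bijections, so no information is lost by comparing components.

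First I would prove $\Phi(x,y,e)=(F(x,y),e)$. Writing $F=(F_{1},F_{2})$ in coordinates and $\Phi(x,y,e)=(a,b,c)$, evaluation of \eqref{Eq:T1} at $(x,y,e)$ gives on the left $(F\times\id_{X})\Psi(x,y,e)=(F(x,y),e)$ by the first identity of \eqref{Eq:G1}, and on the right $(\id_{X}\times F)(a,b,c)=(a,F(b,c))$. Matching components forces $a=F_{1}(x,y)$ and $F(b,c)=(F_{2}(x,y),e)$. But $F(F_{2}(x,y),e)=(F_{2}(x,y),e)$ by the second identity of \eqref{Eq:G2}, so injectivity of $F$ yields $(b,c)=(F_{2}(x,y),e)$, giving the first half of \eqref{Eq:L1}.

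The second half of \eqref{Eq:L1} falls out of \eqref{Eq:T2} applied to $(x,y,e)$: since $r(y,e)=(e,y)$, the right-hand side becomes $\Phi(x,e,y)$, while the left-hand side is $(\id_{X}\times r)(F(x,y),e)=(F_{1}(x,y),e,F_{2}(x,y))=F_{13}(x,e,y)$. A symmetric pair of arguments delivers \eqref{Eq:L2}: feeding $(e,x,y)$ into \eqref{Eq:T1}, then using the second identity of \eqref{Eq:G1}, the first identity of \eqref{Eq:G2}, and injectivity of $F$ gives $\Psi(e,x,y)=(e,F(x,y))$; afterwards, \eqref{Eq:T3} applied to $(e,x,y)$ with $r(e,x)=(x,e)$ yields $\Psi(x,e,y)=F_{13}(x,e,y)$.

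The only real obstacle is bookkeeping: keeping careful track of which coordinate of $F$ occupies which position, and noting that the braiding-operator axioms do the work of shuffling the identity between positions $(1,2)$ or $(2,3)$ and position $2$, leaving the remaining entries in the $F_{13}$ configuration. Conditions \eqref{Eq:G3} and \eqref{Eq:G4} are not required for the lemma, though they provide a useful consistency check, since for instance $m_{23}(F(x,y),e)=F(x,y)=F(x,ye)$.
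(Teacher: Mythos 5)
Your proof is correct and follows essentially the same route as the paper: \eqref{Eq:G1} together with \eqref{Eq:T1} and \eqref{Eq:G2} (your appeal to injectivity of $F$ is just the paper's composition with $F_{23}^{-1}$, resp.\ $F_{12}^{-1}$) gives the first identities, and \eqref{Eq:T2}, \eqref{Eq:T3} with the unit axioms of the braiding operator give the second. The only cosmetic slip is that in the \eqref{Eq:T2} step you swap the labels ``left-hand side'' and ``right-hand side''; the computation itself is right.
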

\begin{proof} The second equation in \eqref{Eq:L1} follows from the first equation together with \eqref{Eq:T2} and \eqref{Eq:brd1} and the first equation in \eqref{Eq:L1} is a consequence of \eqref{Eq:T1}, \eqref{Eq:G1} and \eqref{Eq:G2}: 
\begin{align*}
\Phi (x,y,e)&= F_{23}^{-1}F_{23}\Phi (x,y,e)= F_{23}^{-1}F_{12}\Psi (x,y,e)= F_{23}^{-1}\big( F(x,y),e\big)= \big(F(x,y),e\big)
\\ \Phi (x,e,y)&= r_{23}^{-1}\Phi r_{23}(x,e,y)= r_{23}^{-1}\Phi (x,y,e)= r_{23}^{-1} \big(F(x,y),e\big)= F_{13} (x,e,y)
\end{align*}
The proof of \eqref{Eq:L2} is completely analogues. 
\end{proof}
Observe that by Lemma \ref{L:TwistSkw}, $F$ can be recovered from $\Phi$ or $\Psi$, and twists as defined in Definition \ref{D:TwistSkw} are completely determined by the pair $(\Phi,\Psi)$. Additionally, \eqref{Eq:T1} implies that $\Psi$ is determined by $F$ and $\Phi$. Hence, Definition \ref{D:TwistSkw} can then be re-written in terms of a single map $\Phi$: a twist on a group $(G,m,e)$ with a braiding operator $r$ consists of a bijection $\Phi: X^{3}\rightarrow X^{3}$ such that $\Phi (x,y,e)= (\ov{\Phi}(x,y),e) $ for a bijection $\ov{\Phi}:X^{2}\rightarrow X^{2}$, and $\Phi$ satisfies \eqref{Eq:T2} and 
\begin{align}
\Phi (e, x,y) = (e,x,&y), \quad \ov{\Phi}(x,e)=(x,e)\label{Eq:Z1}
\\m_{23}\Phi &=\ov{\Phi}m_{23}\label{Eq:Z2}
\\ m_{12}\ov{\Phi}_{12}^{-1}\ov{\Phi}_{23}\Phi &=\ov{\Phi} m_{12} \label{Eq:Z3}
\\ r_{12}\ov{\Phi}_{12}^{-1}\ov{\Phi}_{23}\Phi&= \ov{\Phi}_{12}^{-1}\ov{\Phi}_{23}\Phi r_{12} \label{Eq:Z4}
\end{align}
for $x,y \in G$. Notice that $\Psi$ is recovered from $\Phi$ as $\ov{\Phi}_{12}^{-1}\ov{\Phi}_{23}\Phi $, so that \eqref{Eq:T1} holds and \eqref{Eq:Z3} and \eqref{Eq:Z4} are translations of \eqref{Eq:G4} and \eqref{Eq:G3}, respectively. Similarly, \eqref{Eq:Z2} replaces \eqref{Eq:G3}. The remaining conditions in \eqref{Eq:G1} and \eqref{Eq:G2} follow by \eqref{Eq:Z1} and \eqref{Eq:Z2}, since $\Phi (e,x,e)= (\ov{\Phi}(e,x),e)$ and $\ov{\Phi}_{12}^{-1}\ov{\Phi}_{23}\Phi(x,y,e)= \ov{\Phi}_{12}^{-1}\ov{\Phi}_{23}(\ov{\Phi}(x,y),e)= \ov{\Phi}_{12}^{-1}\ov{\Phi}_{12}(x,y,e)$. Although, the data for a twist can be captured in a single map $\Phi$, we will continue to use triples $(F,\Phi,\Psi)$, since they are much more intuitive to work with and make our proofs less complicated. 

\begin{thm}\label{T:TwistSkw} Let $(F,\Phi,\Psi)$ be a Drinfeld twist on a a group $(G,m,e)$ with a braiding operator $r$. Then $(G,mF^{-1}, e)$ defines a new group structure on the set $G$ with a braiding operator $FrF^{-1}$. 
\end{thm}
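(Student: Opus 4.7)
The plan is to proceed in three stages: first verify that $(G,m',e)$ with $m' := mF^{-1}$ satisfies the monoid axioms, then exhibit right $m'$-inverses to promote it to a group, and finally check the four braiding operator axioms for $R := FrF^{-1}$ relative to this group structure. The identity axiom is immediate from \eqref{Eq:G2}, which forces $F^{-1}(e,g) = (e,g)$ and $F^{-1}(g,e) = (g,e)$, hence $m'(e,g) = m'(g,e) = g$. For associativity I would rewrite \eqref{Eq:G3} and \eqref{Eq:G4} as $F^{-1}m_{23} = m_{23}\Phi^{-1}$ and $F^{-1}m_{12} = m_{12}\Psi^{-1}$, and observe that since $m'_{ij} = m_{ij} F^{-1}_{ij}$, the associativity identity $m'\,m'_{12} = m'\,m'_{23}$ becomes $m\,m_{12}\Psi^{-1}F^{-1}_{12} = m\,m_{23}\Phi^{-1}F^{-1}_{23}$; this follows from associativity of $m$ together with \eqref{Eq:T1} in the form $\Psi^{-1}F^{-1}_{12} = \Phi^{-1}F^{-1}_{23}$.

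The trickiest step will be constructing $m'$-inverses, and my plan is to extract them by evaluating the twist axioms on carefully chosen ``diagonal'' triples. Plugging $(x,y,y^{-1})$ into \eqref{Eq:G3} gives $m_{23}\Phi(x,y,y^{-1}) = F(x,e) = (x,e)$ by \eqref{Eq:G2}, so $\Phi(x,y,y^{-1}) = (x, p, p^{-1})$ for some $p = p(x,y) \in G$; the dual identity $F^{-1}m_{23} = m_{23}\Phi^{-1}$ forces $\Phi^{-1}$ to respect the same shape, so $\Phi$ restricts to a bijection on $\lbrace (x,y,y^{-1}) : x,y \in G \rbrace$ preserving the first coordinate. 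A parallel argument using \eqref{Eq:G4} yields $\Psi(x,x^{-1},z) = (q, q^{-1}, z)$ with $\Psi$ restricting to a bijection on $\lbrace (x,x^{-1},z) : x,z \in G \rbrace$ preserving the third coordinate. Substituting the self-intersecting point $(x,x^{-1},x)$ into \eqref{Eq:T1} and comparing the first components of $F_{12}\Psi$ and $F_{23}\Phi$ then forces $\pi_1 F(q(x,x), q(x,x)^{-1}) = x$, so the map $\xi \colon a \mapsto \pi_1 F(a, a^{-1})$ is surjective. This exhibits a right $m'$-inverse for every $x$, namely $\pi_2 F(a, a^{-1})$ for any $a$ with $\xi(a) = x$, and the standard monoid lemma (right inverses in an associative monoid with identity are unique and automatically two-sided) then upgrades $(G,m',e)$ to a group.

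For the braiding operator axioms of $R = FrF^{-1}$, condition \eqref{Eq:brdcomm} is trivial: $m'R = mF^{-1}FrF^{-1} = mrF^{-1} = mF^{-1} = m'$ using $mr = m$. Condition \eqref{Eq:brd1} follows from \eqref{Eq:G2} and the corresponding axiom for $r$: $R(e,g) = Fr(e,g) = F(g,e) = (g,e)$, and symmetrically. The central computation is \eqref{Eq:brdOpr1}: rewriting $Rm'_{12}$ in turn via $F^{-1}m_{12} = m_{12}\Psi^{-1}$, the braiding axiom $rm_{12} = m_{23}r_{12}r_{23}$, and $Fm_{23} = m_{23}\Phi$ reduces it to $m_{23}\Phi r_{12}r_{23}\Psi^{-1}F^{-1}_{12}$; on the other side, rewriting $m'_{23}R_{12}R_{23}$ via $F^{-1}_{23}F_{12} = \Phi\Psi^{-1}$ and $F^{-1}_{12}F_{23} = \Psi\Phi^{-1}$ (both from \eqref{Eq:T1}), then using the commutation relations \eqref{Eq:T2} and \eqref{Eq:T3}, produces $m_{23}\Phi r_{12}r_{23}\Phi^{-1}F^{-1}_{23}$; the two expressions coincide by a final application of \eqref{Eq:T1}. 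The symmetric computation gives \eqref{Eq:brdOpr2}.

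The main obstacle is the construction of $m'$-inverses in the second stage. Associativity and the braiding relations are essentially diagrammatic manipulations once the twist axioms are in play, but the existence of inverses genuinely requires combining the boundary condition \eqref{Eq:G2} with \eqref{Eq:T1} evaluated on the distinguished triple $(x, x^{-1}, x)$ built from the $m$-inverse of $x$; it is precisely here that one uses that $(G,m,e)$ is a group rather than only a monoid.
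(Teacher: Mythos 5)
Your proposal is correct and follows essentially the same route as the paper's own proof: associativity from \eqref{Eq:G3}, \eqref{Eq:G4} and \eqref{Eq:T1}, inverses extracted from the forced shapes of $\Phi(x,y,y^{-1})$ and $\Psi(x,x^{-1},z)$ evaluated at the triple $(x,x^{-1},x)$ via \eqref{Eq:T1}, and the braiding axioms by the same chain of substitutions. The only cosmetic difference is that the paper reads off a two-sided inverse directly by comparing both remaining coordinates of $F_{12}\Psi(x,x^{-1},x)=F_{23}\Phi(x,x^{-1},x)$, whereas you produce only a right inverse and then invoke the standard monoid lemma.
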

\begin{proof} We must first demonstrate that $(G,mF^{-1}, e)$ is a group. First, we observe that $mF^{-1}$ is an associative operation by \eqref{Eq:T1} and \eqref{Eq:G3}, \eqref{Eq:G4}:
\begin{align*}
(mF^{-1})(mF^{-1})_{12}&= mF^{-1}m_{12}F^{-1}_{12}= m m_{12}\Psi^{-1}F^{-1}_{12}= m m_{23}\Phi^{-1}F_{23}^{-1}
\\&= mF^{-1}m_{23}F_{23}^{-1}= (mF^{-1})(mF^{-1})_{23}
\end{align*}
By definition \eqref{Eq:G2} implies that $e\in G$ acts as the identity element for $mF^{-1}$. What remains is the existence of inverses with respect to $mF^{-1}$. Let $x\in G$, then for any element $y\in G$, we first observe that $\Phi (y,x,x^{-1})= (y, z,z^{-1})$, for some $z\in G$, since 
\begin{align*}
m_{23}\Phi (y,x,x^{-1})= F m_{23} (y,x,x^{-1})= F(y,e)=(y,e)
\end{align*}
A similar argument shows that $\Psi (x,x^{-1},y)= ( l,l^{-1},y)$, for some $l\in G$. Now we observe that $F_{23}\Phi (x,x^{-1},x)= (x,x_{F},x^{-1})$, for some $x_{F}$. By the previous argument, there exists a pair $z,l\in G$ such that
\begin{align*}
\big( x,F(z,z^{-1}) \big)= F_{23}\Phi (x,x^{-1},x)&= F_{12}\Psi (x,x^{-1},x)= \big(F(l,l^{-1}),x\big) 
\\ \Rightarrow mF^{-1}(x_{F}, x)= m(z,z^{-1})=e,&\quad  mF^{-1}(x,x_{F})=m(l,l^{-1})=e
\end{align*}
Consequently, $x_{F}$ is the inverse of $x$ with respect to $mF^{-1}$ and $(G,m,1)$ forms a group.

In the second part of the proof we must show that $FrF^{-1}$ is a braiding operator on the group. Condition \eqref{Eq:brdcomm} follows by definition since $(mF^{-1})(FrF^{-1})= mrF^{-1}=mF^{-1}$ and \eqref{Eq:brd1} follows by \eqref{Eq:G2} and $r$ satisfying \eqref{Eq:brd1} e.g. $FrF^{-1}(1,x)= Fr(1,x)=F(x,1)=(x,1)$. To prove \eqref{Eq:brdOpr1}, we utilise the equivalent form of \eqref{Eq:T1}, $F_{23}^{-1}F_{12}= \Phi\Psi^{-1}$, and \eqref{Eq:T2}, \eqref{Eq:G3}, \eqref{Eq:T1} and \eqref{Eq:G4}, respectively:
\begin{align*}
 (mF^{-1})_{23}&(FrF^{-1})_{12}(FrF^{-1})_{23}= m_{23}F^{-1}_{23}F_{12}r_{12}F^{-1}_{12}F_{23}r_{23}F^{-1}_{23}=m_{23}\Phi \Psi^{-1}r_{12}\Psi \Phi^{-1} r_{23}F^{-1}_{23} 
 \\&=\Phi m_{23}r_{12} r_{23}\Phi^{-1}F^{-1}_{23} = F r m_{12}\Psi^{-1}F^{-1}_{12}= FrF^{-1}m_{12}F^{-1}_{12}=(FrF^{-1})(mF^{-1})_{12}
\end{align*}
Proving \eqref{Eq:brdOpr2} is analogues to the argument above and is left to the reader.
\end{proof}

The above results have been stated in terms of groups with braiding operators, which are equivalent structures to skew braces. The latter objects consist of a set $B$ with two compatible group structures $(B,.)$ and $(B,\star )$. In particular, any group $(G,.)$ with braiding operator $r$ gives rise to a skew braces $(G,., \star)$, where $x\star y:= x.\sigma_{x}^{-1}(y)$, while the braiding operator $r$ can be recovered from the group operations of a skew brace $(G,.,\star)$. We refer to Section 3 of \cite{smoktunowicz2018skew} for additional details, where the products $.$ and $\star$ are denoted by $\circ$ and $.$ , respectively. Although $r$ can be determined in terms of the skew brace operations, $.$ and $\star$, the author has not been able to find a way to re-write the axioms of Definition \ref{D:TwistSkw} solely in terms of $.$ and $\star$. However, by the end of this section we will fully classify all twists between skew braces and describe them in terms of the skew brace operations. From this point onwards, we will use the terms group with braiding operator and skew brace interchangeably and refer to $(G,\star )$ as the \emph{additive group} of the skew brace $(G,.,\star)$, as done in \cite{smoktunowicz2018skew}.

In the next example we will reflect on the difference between twists on a YBE solution $(X,r)$ and twists on the solution's universal skew brace $G(X,r)=\langle x\in X \mid x.y= \sigma_{x}(y).\gamma_{y}(x), \ \forall x,y\in X\rangle $. 
\begin{ex}\label{E:Universal} Let $(X,r)$ be as in Example \ref{E:S4}. As discussed in Example 3.4 of \cite{smoktunowicz2018skew}, the universal skew brace of $(X,r)$, denoted by $G$, will be the free abelian group on two generators, $a$ and $b$, with the flip map $\fr{fl}_{G}$ as its braiding operator. The natural map $i: X\rightarrow G$ sends $1,2$ to generator $a$ and $3,4$ to generator $b$. First, let us consider the problem of extending a twist $(f,\phi,\psi)$ on $(X,r)$ to a twist of skew braces $(F,\Phi,\Psi)$ on $(G,\fr{fl}_{G})$, so that the triples commute with $i$. The issue here is that $i$ is not injective. For example the triple $f= (13)\times (13)$, $\phi =(13)\times\id_{X^{2}}$, $\psi=\id_{X^{2}}\times (13)$ define a twist on $(X,r)$, while there cannot exist a map $F$ such that $F(i\times i) = (i\times i )f$ holds true since $i(1)=i(2)\neq i(3)$. Even if we could define the triple on the generators, there is no unique way to extend maps such as $\psi$ and $\phi$ to multiplications of the generators e.g. $\Phi (a^{3},a^{2},a^{7})$ is not uniquely determined by $(F,\Phi,\Psi)$ being defined on the generators of the group. The reader should also note that the twists in Examples \ref{E:S4} and \ref{E:noninvLyuba} all lift to the trivial twist where $F,\Phi$ and $\Psi$ are all identity morphisms. In the opposite direction, it is also non-trivial to check whether a twist on $G$ extends a canonical twist on $X$. The issue here is that a generator in $G$ represents several elements of $X$. Consider the group automorphisms $\chi_{a^{n}b^{m}}: G\rightarrow G $ corresponding to group elements $a^{n}b^{m}\in G$ defined by 
\begin{equation*}
\chi_{a^{n}b^{m}}(a^{l}b^{k})=\begin{cases}
a^{l}b^{k}\text{ if }n+m\text{ is even}
\\b^{l}a^{k}\text{ if }n+m\text{ is odd}
\end{cases} 
\end{equation*}
We obtain a twist on $(G,\fr{fl}_{G})$ defined by $F(g,h)=(g,\chi_{g} (h))$ and $\Phi (g,h,k)= (g,\chi_{g} (h),\chi_{g} (k))$ and $\Psi=(g,h,\chi_{gh}(k)) $, which recovers the universal skew brace appearing in Example 3.12 of \cite{bachiller2018solutions}. In this case, we can define $f(x,y)= (\varpi ( x), \varphi(y))$ with any of the choices $\varpi=\id_{X},(12),(34),(12)(34)$ and $\varphi=(13)(24),(14)(23)$ and $F(i\times i) = (i\times i )f$ will follow regardless of our choice.  Consequently, there is no natural choice for a triple $(f,\phi,\psi)$ so that $(F,\Phi,\Psi)$ is a lift of the former. For example, the triples $f= \varpi \times (14)(23)$, $\phi= \varpi \times (13)(24)\times (14)(23)$ and $\psi = \varpi \times \varpi\times \id_{X}$ form twists lifting to $(F,\Phi,\Psi)$ for both choices $\varpi =(12)$ and $\varpi =(34)$. 
\end{ex} 

\begin{thm}\label{T:SkwGroupoid} The category which has skew braces as objects and Drinfeld twists between them as morphisms forms a subgroupoid of the groupoid of YBE solutions of Theorem \ref{T:YBEGroupoid}. 
\end{thm}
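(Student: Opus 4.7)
The plan is to invoke Theorem \ref{T:YBEGroupoid} to handle everything ``underlying'' (identities, composition and inverses of the YBE-level data) and then check that the extra axioms \eqref{Eq:G1}--\eqref{Eq:G4} of Definition \ref{D:TwistSkw} are preserved by these three operations. Since every skew brace twist is in particular a twist on the underlying YBE solution, it suffices to verify three things: that the trivial twist $(\id_{G^{2}},\id_{G^{3}},\id_{G^{3}})$ satisfies \eqref{Eq:G1}--\eqref{Eq:G4} (immediate, as each condition reduces to $m=m$ or $(x,y,z)=(x,y,z)$), that the composite of two composable skew brace twists still satisfies \eqref{Eq:G1}--\eqref{Eq:G4}, and that the inverse of a skew brace twist (as constructed in the proof of Theorem \ref{T:YBEGroupoid}) still satisfies them.

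For composition, fix a twist $(F,\Phi,\Psi)$ on a skew brace $(G,m,e)$ and a twist $(G',\phi,\psi)$ on the twisted skew brace $(G,mF^{-1},e)$. By Theorem \ref{T:YBEGroupoid}, the composite twist is $\bigl(G'F,\ F_{23}^{-1}\phi F_{23}\Phi,\ F_{12}^{-1}\psi F_{12}\Psi\bigr)$. Conditions \eqref{Eq:G2} and \eqref{Eq:G1} for the composite follow by a short direct chase on inputs of the form $(e,x,y)$ and $(x,y,e)$, using \eqref{Eq:G2} and \eqref{Eq:G1} for each of the two factors in turn. The key calculation is \eqref{Eq:G3}. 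Applying \eqref{Eq:G3} for $\phi$ with multiplication $mF^{-1}$ and writing $F_{23}=\id\times F$, one gets
\begin{align*}
m_{23}\bigl(F_{23}^{-1}\phi F_{23}\Phi\bigr) &= (mF^{-1})_{23}\,\phi\, F_{23}\Phi \cdot F_{23}F_{23}^{-1}\\
&= G'(\id\times mF^{-1})\,F_{23}\Phi = G'(\id\times m)\Phi = G'F(\id\times m),
\end{align*}
where the last equality is \eqref{Eq:G3} for $(F,\Phi)$. Condition \eqref{Eq:G4} is entirely symmetric, using \eqref{Eq:G4} for $\psi$ and then for $\Psi$.

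For inverses, Theorem \ref{T:YBEGroupoid} gives the triple $(F^{-1},F_{23}\Phi^{-1}F_{23}^{-1},F_{12}\Psi^{-1}F_{12}^{-1})$ as a twist on $(G,FrF^{-1})$, which should now be a skew brace twist with respect to the group $(G,mF^{-1},e)$. Conditions \eqref{Eq:G2}, \eqref{Eq:G1} are again immediate: $F(e,x)=(e,x)$ forces $F^{-1}(e,x)=(e,x)$ and the vanishing of $\Phi,\Psi$ on units transfers to $\Phi^{-1},\Psi^{-1}$. For \eqref{Eq:G3} one rewrites the original axiom as $(\id\times m)\Phi^{-1}=F^{-1}(\id\times m)$ and then computes
\begin{align*}
(mF^{-1})_{23}\,F_{23}\Phi^{-1}F_{23}^{-1} &= m_{23}F_{23}^{-1}F_{23}\Phi^{-1}F_{23}^{-1} = F^{-1}m_{23}F_{23}^{-1}\\
&= F^{-1}\,(\id\times mF^{-1}),
\end{align*}
which is exactly \eqref{Eq:G3} for the inverse data, and \eqref{Eq:G4} follows analogously from \eqref{Eq:G4} for $\Psi$.

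The only step with any real content is the composition calculation for \eqref{Eq:G3}, \eqref{Eq:G4}: one must carefully keep track of which multiplication ($m$ or $mF^{-1}$) belongs to which layer, and absorb the $F_{23}^{\pm 1}$ and $F_{12}^{\pm 1}$ factors. Everything else is bookkeeping on top of Theorem \ref{T:YBEGroupoid}.
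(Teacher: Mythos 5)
Your proposal is correct and follows essentially the same route as the paper: reduce the identity/composition/inverse structure to Theorem \ref{T:YBEGroupoid} and then verify \eqref{Eq:G1}--\eqref{Eq:G4} for the trivial, composite and inverse triples, with the only substantive computations being exactly the \eqref{Eq:G3} chains $m_{23}F_{23}^{-1}\phi F_{23}\Phi=(mF^{-1})_{23}\phi F_{23}\Phi=\mathbf{F}m_{23}\Phi=\mathbf{F}Fm_{23}$ and $(mF^{-1})_{23}F_{23}\Phi^{-1}F_{23}^{-1}=F^{-1}(mF^{-1})_{23}$ that appear in the paper's proof. The inserted factor $F_{23}F_{23}^{-1}$ in your composition display is redundant but harmless, and the rest is the same bookkeeping.
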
 
\begin{proof} We only need to show that Drinfeld twists on skew braces respect the composition and inverse operation defined in Theorem \ref{T:YBEGroupoid}. Recall from the proof of Theorem \ref{T:YBEGroupoid}, the inverse of a triple $(F,\Phi,\Psi)$ is given by $(F^{-1}, F_{23}\Phi^{-1} F^{-1}_{23},F_{12}\Psi^{-1} F^{-1}_{12})$ and we must demonstrate that it is indeed a twist on $(G,mF^{-1},e)$ with braiding operator $FrF^{-1}$. If the first triple satisfies \eqref{Eq:G1} and \eqref{Eq:G2}, it follows directly that the inverse triple also satisfies these axioms, since the unit element of the twisted skew brace remains the same. Now we demonstrate \eqref{Eq:G3}:
\begin{align*}
(mF^{-1})_{23}F_{23}\Phi^{-1} F^{-1}_{23}= m_{23}F^{-1}_{23}F_{23}\Phi^{-1} F^{-1}_{23}=  m_{23}\Phi^{-1} F^{-1}_{23}= F^{-1}m_{23} F^{-1}_{23}= F^{-1}(m F^{-1})_{23}
\end{align*}
Condition \eqref{Eq:G4} holds in a symmetric manner. 

We must also check that composing twists as defined in Theorem \ref{T:YBEGroupoid} respect the additional axioms in Definition \ref{D:TwistSkw}. Let $(F,\Phi,\Psi)$ be a twist on $(G,m,e,r)$ and $(\mathbf{F},\phi,\psi)$ be a twist on $(G,mF^{-1}, e, FrF^{-1})$. Then the composite twist $(\mathbf{F}F, F_{23}^{-1}\phi F_{23}\Phi, F_{12}^{-1}\psi F_{12}\Psi)$ satisfies \eqref{Eq:G1} and \eqref{Eq:G2} by triviality. Conditions \eqref{Eq:G3} holds since since $(F,\Phi)$ and $(\mathbf{F},\phi)$ satisfy \eqref{Eq:G3}:
\begin{align*}
m_{23}F_{23}^{-1}\phi F_{23}\Phi= (mF^{-1})_{23}\phi F_{23}\Phi= \mathbf{F}(mF^{-1})_{23}F_{23}\Phi= \mathbf{F}m_{23}\Phi=\mathbf{F} F m_{23}
\end{align*}
Condition \eqref{Eq:G4} holds in a symmetric manner and is left to the reader. 
\end{proof}

Note that Drinfled twists on YBE solutions and skew braces can be defined upto isomorphism. Explicitly, if $(G,m,.)$ with operator $r$ and $(H,m',e')$ with $r'$ are isomorphic via a bijection $f:G\rightarrow H$ satisfying $m' (f\times f)= fm$ and $(f\times f)r=r'(f\times f)$, then twists $(F,\Phi,\Psi)$ on $(G,m,e,r)$ are in bijection with twists $(F',\Phi',\Psi')$ on $(H,m',e',r')$, where the latter pairs are obtained by conjugation by $f\times f$ and $f\times f\times f$ e.g. $F':= (f\times f)F(f^{-1}\times f^{-1})$. Additionally, the twisted skew braces $(G,mF^{-1},e, FrF^{-1})$ and $(G,m(F')^{-1},e, F'r'(F')^{-1})$ obtained from corresponding twists between a pair of isomorphic skew braces will be isomorphic via the same map $f:X\rightarrow Y$, since $(f\times f)FrF^{-1}=F'r'(F')^{-1}(f\times f)$ and $ fmF^{-1}=m'(F')^{-1}(f\times f)$. Hence, we could refine the groupoids of Theorems \ref{T:YBEGroupoid} and \ref{T:SkwGroupoid} to have isomorphism classes of YBE solutions and skew braces as objects, respectively. 

\begin{ex}\label{E:Theta} We can realise the twists defined in Example \ref{E:Doikou} for any skew brace $(G,.,r)$. Define $F(x,y)=(x,\sigma_{x}(y))$, $\Phi (x,y,z)= (x,\sigma_{x}(y), \sigma_{\gamma_{y}(x)}(z))$ and $\Psi(x,y,z)=(x,y,\sigma_{x}(\sigma_{y}(z)))$, then $(F,\Phi, \Psi)$ form a twist on the underlying solution $(G,r)$ by Example \ref{E:Doikou} and \eqref{Eq:G1} and \eqref{Eq:G2} hold trivially by \eqref{Eq:brd1}, while \eqref{Eq:G3} and \eqref{Eq:G4} follow from \eqref{Eq:brdOpr2} and \eqref{Eq:brdOpr1}, respectively. By Theorem \ref{T:TwistSkw} we obtain a new group structure on $G$ defined by $mF^{-1}(x,y)=x.\sigma_{x}^{-1}(y)$ which agrees with the additive group structure of the skew brace, $\star$, and a corresponding braiding operator $FrF^{-1}(x,y)= (y, \sigma_{y}(\gamma_{\sigma^{-1}_{x} (y)}(x)))$. Notice that $\sigma_{y}(\gamma_{\sigma^{-1}_{x} (y)}(x))= y^{\star} \star x \star y$, where $y^{\star}$ denotes the inverse of $y\in G$ with respect to the operation $\star$. In the terminology of \cite{guarnieri2017skew}, $F(x,y)= (x,x^{-1}.(x\star y))$ and this twists sends a skew brace $(G,.,\star)$ to the trivial skew brace  $(G,\star, \star)$. The latter is the skew brace obtained by a single group $(G,\star)$ as described in Example 1.3 of \cite{guarnieri2017skew}.
\end{ex}

By Example \ref{E:Theta}, it follows that the any skew brace $(G,.,\star)$ is related to the trivial skew brace $(G,\star,\star)$ by a twist. Since Drinfeld twists can be inverted and composed, by Theorem \ref{T:SkwGroupoid}, we conclude that any two skew braces with the same additive groups are related via Drinfeld twists. Two observations follow from this result. Firstly, it is well known that there exist non-isomorphic skew braces with isomorphic additive groups. Hence, the notion of a twist relating two skew braces is much weaker than isomorphism of skew braces. Secondly, the problem of whether two skew braces $(G,.,\star )$ and $(G,.',\star')$ are related by Drinfeld twists fully translates into a problem about their underlying additive groups and whether there exists Drinfeld twists between $(G,\star,\star)$ and $(G,\star',\star')$, which we can answer. 
\begin{thm}\label{T:GrpTheoretic} If two skew braces $(G,\circ ,.)$ and $(G,.',\star)$ are related via a Drinfeld twist, then their respective additive groups $(G,.)$ and $(G,\star)$ are isomorphic. 
\end{thm}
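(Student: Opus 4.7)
The plan is to reduce to the case of twists between trivial skew braces using Example~\ref{E:Theta}, and then extract a group isomorphism from the constrained twist data in that reduced setting.

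\textbf{Step 1 (Reduction to trivial skew braces).} Suppose $(F,\Phi,\Psi)$ is a Drinfeld twist from $(G,\circ,.)$ to $(G,.',\star)$. By Example~\ref{E:Theta}, there exists a Drinfeld twist $T_{1}$ from $(G,\circ,.)$ to the trivial skew brace $(G,.,.)$, and a Drinfeld twist $T_{2}$ from $(G,.',\star)$ to the trivial skew brace $(G,\star,\star)$. Since Drinfeld twists on skew braces form a groupoid by Theorem~\ref{T:SkwGroupoid}, the composite $T_{2}\circ (F,\Phi,\Psi)\circ T_{1}^{-1}$ is a Drinfeld twist from $(G,.,.)$ to $(G,\star,\star)$. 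Consequently it suffices to show that the existence of a Drinfeld twist between the trivial skew braces $(G,.,.)$ and $(G,\star,\star)$ implies $(G,.)\cong (G,\star)$ as groups.

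\textbf{Step 2 (Structure of the reduced twist).} Let $(F,\Phi,\Psi)$ be a twist from $(G,.,.)$ to $(G,\star,\star)$. Both braiding operators are the flip map $\fr{fl}_{G}$, so the requirement $F\fr{fl}_{G}F^{-1}=\fr{fl}_{G}$ forces $F$ to commute with $\fr{fl}_{G}$. Writing $F(x,y)=(f_{y}(x),f_{x}(y))$ for a family of bijections $\{f_{g}:G\to G\}_{g\in G}$, conditions \eqref{Eq:G2} yield $f_{e}=\id_{G}$ and $f_{g}(e)=e$ for every $g\in G$, while the identity $mF^{-1}=\star$ translates to
\begin{equation*}
f_{y}(x)\star f_{x}(y)=x.y\qquad \forall x,y\in G.
\end{equation*}

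\textbf{Step 3 (Extracting the isomorphism).} Combining \eqref{Eq:G3}, \eqref{Eq:G4} with the flip-compatibility conditions \eqref{Eq:T2}, \eqref{Eq:T3}, \eqref{Eq:T1} and the boundary formulas supplied by Lemma~\ref{L:TwistSkw}, I would show that $\Phi$ and $\Psi$ are forced to take the canonical forms $\Phi(x,y,z)=(f_{y.z}(x),f_{x}(y),f_{x}(z))$ and $\Psi(x,y,z)=(f_{z}(x),f_{z}(y),f_{x.y}(z))$. Condition \eqref{Eq:G3} then immediately gives $f_{x}(y).f_{x}(z)=f_{x}(y.z)$, so each $f_{g}$ is a group homomorphism, and hence a group automorphism of $(G,.)$. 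Feeding this back into the relation from Step~2 produces the desired group isomorphism $(G,.)\cong(G,\star)$; a clean realisation is to argue that the bijection $h:G\to G$ defined from a suitable value of $F$ (for instance $h(x):=f_{x}(x)$ after rescaling) intertwines the two group operations.

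\textbf{Main obstacle.} The technical core of the argument is Step~3, namely showing that $\Phi$ and $\Psi$ are forced into the canonical form described above and that the resulting bijection from the data of $F$ is indeed a group homomorphism $(G,.)\to(G,\star)$. The interplay between \eqref{Eq:T1}--\eqref{Eq:T3} and \eqref{Eq:G3}--\eqref{Eq:G4} is delicate, since in principle the decomposition $\phi_{2}.\phi_{3}=f_{x}(y.z)$ admits many solutions; ruling out spurious factorisations and pinning down a specific map which is simultaneously bijective and a homomorphism is where the proof requires the most care.
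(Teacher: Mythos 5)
Your Step 1 is exactly the paper's reduction (via Theorem~\ref{T:SkwGroupoid} and Example~\ref{E:Theta}), but Steps 2 and 3 contain genuine errors. First, the braiding operator of the trivial skew brace $(G,.,.)$ is \emph{not} the flip map unless $(G,.)$ is abelian: as computed in Example~\ref{E:Theta}, the trivial skew brace on a group carries the conjugation braiding $r(x,y)=(y,y^{-1}xy)$, and likewise $r'(x,y)=(y,y^{\star}\star x\star y)$ on $(G,\star,\star)$. So the premise ``$F$ commutes with $\fr{fl}_{G}$'' and the resulting parametrisation $F(x,y)=(f_{y}(x),f_{x}(y))$ are unjustified. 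The correct normal form comes from combining $\star F=m$ with the intertwining condition $Fr=r'F$: writing the first component of $F(x,y)$ as $f_{x.y}(x)$ (a harmless relabelling), the second component is forced by $\star F=m$ to be $(f_{x.y}(x))^{\star}\star(x.y)$, and $Fr=r'F$ then forces this to equal $f_{x.y}(y)$, i.e.\ $F(x,y)=(f_{x.y}(x),f_{x.y}(y))$ with $f_{x.y}(x)\star f_{x.y}(y)=x.y$ --- the \emph{same} map applied in both slots, indexed by the product, which is not what your parametrisation gives.

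Second, and more seriously, the conclusion you extract in Step 3 is of the wrong type. With your claimed form of $\Phi$, condition \eqref{Eq:G3} yields $f_{x}(y).f_{x}(z)=f_{x}(y.z)$, which makes each $f_{x}$ an automorphism of $(G,.)$; this does not prove $(G,.)\cong(G,\star)$. What the theorem needs is the \emph{mixed} identity $f(y)\star f(z)=f(y.z)$, and obtaining it is precisely the content of the paper's argument: \eqref{Eq:G3} only pins $\Phi$ down up to unknown maps $\alpha_{x,c}$ in the last two slots, \eqref{Eq:T2} then gives $f_{xc}(c)=\alpha_{x,c}(z).\alpha_{x,c}(z^{-1}c)$, the third component of \eqref{Eq:T1} identifies $\alpha_{x,c}=f^{-1}_{f_{xc}(c)}f_{xc}$, and substituting back through $f_{xy}(x)\star f_{xy}(y)=x.y$ converts the $.$-product identity into $f_{p}(z)\star f_{p}(z')=f_{p}(z.z')$, exhibiting each $f_{p}$ as an isomorphism $(G,.)\rightarrow(G,\star)$. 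Your proposed repair via $h(x):=f_{x}(x)$ ``after rescaling'' cannot work as stated: the normalisation one ends up with (cf.\ Corollary~\ref{C:Decomposition}) is $f_{g}(g)=g$, so such a map is the identity and carries no homomorphism information. The gap is therefore not a matter of routine verification; the interplay of \eqref{Eq:T1}, \eqref{Eq:T2} and \eqref{Eq:G3} that produces the mixed homomorphism property is the actual proof, and your sketch replaces it with an identity that does not imply the statement.
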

\begin{proof} One direction of the argument has already been proved in the discussion before Theorem \ref{T:GrpTheoretic}. For the other direction, let us assume that there exists a Drinfeld $(G,\circ ,.)$ relating $(G,.',\star)$. Then by Theorem \ref{T:SkwGroupoid} and Example \ref{E:Theta}, we can obtain a Drinfeld twist relating $(G,.,.)$ and $(G,\star,\star)$. Hence, let $(F,\Phi,\Psi)$ be a twist on $(G,.,.)$ such that $\star =.F^{-1}$ and for any $x,y\in G$ we have $r'(x,y):=FrF^{-1}(x,y)=  (y,y^{\star}\star x \star y )$, where $r(x,y)= (y,y^{-1}xy)$. Since $\star F=.$, we can write $F(x,y)= \big( f_{xy}(x),(f_{xy}(x))^{\star}\star (x. y) \big)$ for a family of maps $f_{g}:G\rightarrow G$ corresponding to $g\in G$. Since $F$ is a bijection, then the maps $f_{g}$ are also bijective. Consequently, we have that 
\begin{align*}
 Fr(x,y ) = \big( f_{xy}(y),(f_{xy}(y))^{\star}\star (x y) \big)= \big((f_{xy}(x))^{\star}\star (x. y),(xy)^{\star}\star f_{xy}(x) \star (xy) \big)=r'F(x,y)
\end{align*}
which implies that $f_{xy}(x)\star f_{xy}(y)= xy$ for any pair $x,y\in G$ and $F(x,y)=( f_{xy}(x),f_{xy}(y))$.

By \eqref{Eq:G3} it follows that $\Phi(x,y,z)= (f_{xyz}(x),f_{xyz}(yz) (\alpha_{x,yz}(z))^{-1}, \alpha_{x,yz}(z))$ for some maps $\alpha_{x,c}:G\rightarrow G$ corresponding to $x,c\in G$. Expanding equation \eqref{Eq:T2} gives
\begin{align*}
\big( f_{xyz}(x), f_{xyz}(yz) (\alpha_{x,yz}(z^{-1}yz))^{-1}, \alpha_{x,yz}(z^{-1}yz)\big)= \big(f_{xyz}(x),  \alpha_{x,yz}(z), (\alpha_{x,yz}(z))^{-1}f_{xyz}(yz)\big)
\end{align*}
which implies that $f_{xc}(c)= \alpha_{x,c}(z)\alpha_{x,c}(z^{-1}c)$ for any $x,c,z\in G$, where we have replaced the term $yz$ by $c$ since $y$ is a free variable. We can also re-write $\Phi(x,y,z)= (f_{xyz}(x),\alpha_{x,yz}(y), \alpha_{x,yz}(z))$. A symmetric argument shows that $\Psi (x,y,z)= (\beta_{xy,z}(x),\beta_{xy,z}(y) ,f_{xyz}(z))$ for some maps $\beta_{c,z}:G\rightarrow G$ satisfying $\beta_{xy,z}(x).\beta_{xy,z}(y)= f_{xyz}(xy)$.

Now we compare the third component of equation \eqref{Eq:T1}. The third component of $F_{12}\Psi(x,y,z)$ is $f_{xyz}(z)$, while the third component of $F_{23}\Phi(x,y,z)$ is $f_{f_{xyz}(yz)}(\alpha_{x,yz}(z))$. Equating the two values tells us that $\alpha_{x,c}= f^{-1}_{f_{xc}(c)}f_{xc}$ and the identity $f_{xc}(c)= \alpha_{x,c}(z).\alpha_{x,c}(z^{-1}c)$ can be re-written as 
\begin{align*}
f_{xc}(c)=f^{-1}_{f_{xc}(c)}(f_{xc}(z)). f^{-1}_{f_{xc}(c)}(f_{xc}(z^{-1}c))
\end{align*}
where $x,c,z\in G$ are all independent variables. By $f_{xy}(x)\star f_{xy}(y)= xy$, we have that
\begin{align*}
f_{xc}(z)\star f_{xc}(z^{-1}c)= f_{f_{xc}(c)}\left(f^{-1}_{f_{xc}(c)}(f_{xc}(z))\right)\star f_{f_{xc}(c)}\left(f^{-1}_{f_{xc}(c)}(f_{xc}(z^{-1}c))\right)= f_{xc}(c)
\end{align*}
Thereby, we can change variables by $x=pc^{-1}$ and $c=zz'$ and observe that $f_{p}(z)\star f_{p}(z') = f_{p}(z.z')$ for all $p,z,z'\in G$. Hence, $f_{p}: (G,.)\rightarrow (G,\star)$ are groups isomorphisms for all $p\in G$.  
\end{proof}
We could continue the proof of Theorem \ref{T:GrpTheoretic} by comparing the first component of equation \eqref{Eq:T1}, which implies that $\beta_{x,y}= f^{-1}_{f_{xy}(x)}f_{xy}$ and the identity on $\beta_{xy,z}$ follows by $f_{p}$ being group morphisms. With this, \eqref{Eq:T1} holds automatically and \eqref{Eq:G1} and \eqref{Eq:G2} follow by $f_{g}(1)=1$. Hence, this result fully classifies all possible twists from $(G,.,.)$ to $(G,\star , \star)$, for isomorphic $(G,.)$ and $(G,\star)$. Given any family of group isomorphisms $\lbrace f_{x}: (G,.)\rightarrow (G,\star)\rbrace_{x\in G}$ satisfying $f_{x}(x)=x$, we obtain a non-trivial twist on $(G,.,.)$ by setting $F(x,y)=(f_{xy}(x),f_{xy}(y))$ and $\Phi$ and $\Psi$ as defined in the proof of Theorem \ref{T:GrpTheoretic}, with $\alpha_{x,y}=f^{-1}_{f_{xy}(y)}f_{xy}$ and $\beta_{x,y}= f^{-1}_{f_{xy}(x)}f_{xy}$. In the groupoid of skew braces and twists of Theorem \ref{T:SkwGroupoid}, these twists are all possible morphisms from $(G,.,.)$ to $(G,\star,\star)$. In particular, all endomorphisms of $(G,.,.)$ in this groupoid must also come from a family of group automorphisms $\lbrace f_{x}\rbrace_{x\in G}$ satisfying $f_{x}(x)=x$. 

Recall that in an arbitrary groupoid, if we are given morphisms $a: o_{1}\rightarrow o_{2}$, $a_{i}:o_{i}\rightarrow o'_{i}$ for $i=1,2$ between some objects $o_{1},o_{2},o'_{1},o'_{2}$, then there must exist a morphism $b: o'_{1}\rightarrow o'_{2}$ defined by $b=a_{2}aa_{1}^{-1}$ such that $a= a_{2}^{-1}ba_{1}$. Hence, we observe that any twist $(F,\Phi, \Psi):(G,.,\star) \rightarrow (G,.',\star')$ can be decomposed as the composition of twists coming from Example \ref{E:Theta} and a twist from $(G,\star,\star)$ to $(G,\star' ,\star')$ which by the above result has to come from a family of group isomorphisms.

\begin{corollary}\label{C:Decomposition} Any twist $(F,\Phi, \Psi):(G,.,\star) \rightarrow (G,\circ,\star)$ can be decomposed as the composition of the twists in Example \ref{E:Theta} and those coming from Theorem \ref{T:GrpTheoretic}. In particular, there exists a family of group isomorphisms $\lbrace f_{x}: (G,\star)\rightarrow (G,\star')\rbrace_{x\in G}$ satisfying $f_{x}(x)=x$ so that
\begin{equation}\label{Eq:AnyTwist}
F(x,y)=\left( f_{x.y}(x), \underline{\sigma}_{f_{x.y}(x)}^{-1}\big(f_{x.y}(\sigma_{x}(y))\big)\right) =\left( f_{x.y}(x),f_{x.y}(x)^{\circ} \circ (x.y)\right)
\end{equation}
where $\sigma$ and $ \underline{\sigma}$ denote the relevant maps for the braiding operators of $(G,.,\star)$ and $(G,\circ,\star)$, respectively. 
\end{corollary}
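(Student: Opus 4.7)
The plan is to manipulate the groupoid of Theorem \ref{T:SkwGroupoid} in exactly the abstract way sketched immediately before the statement. Given a twist $a:(G,.,\star)\rightarrow(G,\circ,\star)$, write $a_1:(G,.,\star)\rightarrow(G,\star,\star)$ and $a_2:(G,\circ,\star)\rightarrow(G,\star,\star)$ for the two Example \ref{E:Theta} twists associated to the source and target skew braces. Crucially, both $a_1$ and $a_2$ land in the \emph{same} trivial skew brace $(G,\star,\star)$, since by hypothesis the two additive groups coincide. Hence the composite $b := a_2 \circ a \circ a_1^{-1}$ is a well-defined endomorphism of $(G,\star,\star)$ in the groupoid, and we obtain the formal decomposition $a = a_2^{-1} \circ b \circ a_1$, which is precisely the first assertion of the corollary.

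The second step is to classify $b$ explicitly. By the discussion following Theorem \ref{T:GrpTheoretic}, every such endomorphism arises from a family $\{f_x:(G,\star)\rightarrow(G,\star)\}_{x\in G}$ of group automorphisms satisfying $f_x(x)=x$, with $F$-component $F_b(x,y) = (f_{x\star y}(x), f_{x\star y}(y))$. To derive the first equality in \eqref{Eq:AnyTwist}, one composes $F_a = F_{a_2^{-1}} F_b F_{a_1}$ using $F_{a_1}(x,y) = (x,\sigma_x(y))$ from Example \ref{E:Theta} and its analogue $F_{a_2^{-1}}(x,y) = (x,\underline{\sigma}_x^{-1}(y))$ for the inverse twist. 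Evaluating at $(x,y)$ and using the skew brace identity $x.y = x\star\sigma_x(y)$ in $(G,.,\star)$ to simplify the subscript of $f$, one obtains precisely the claimed expression.

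For the second equality, unpack the skew brace identity $a\star b = a\circ\underline{\sigma}_a^{-1}(b)$ in $(G,\circ,\star)$ to rewrite $\underline{\sigma}_a^{-1}(b) = a^{\circ}\circ(a\star b)$. Setting $a = f_{x.y}(x)$ and $b = f_{x.y}(\sigma_x(y))$, and invoking both that $f_{x.y}$ is a $\star$-homomorphism and the fixed-point property $f_{x.y}(x.y) = x.y$, the inner expression collapses as $f_{x.y}(x)\star f_{x.y}(\sigma_x(y)) = f_{x.y}(x\star\sigma_x(y)) = f_{x.y}(x.y) = x.y$, delivering $f_{x.y}(x)^{\circ}\circ(x.y)$. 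I expect the whole argument to be essentially bookkeeping once Theorems \ref{T:SkwGroupoid} and \ref{T:GrpTheoretic} are granted; the only subtle point is confirming that the inversion recipe of Theorem \ref{T:YBEGroupoid} applied to $a_2$ really does produce $F_{a_2^{-1}}(x,y) = (x,\underline{\sigma}_x^{-1}(y))$, which is immediate from the explicit description of $F_{a_2}$ in Example \ref{E:Theta}.
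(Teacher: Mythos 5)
Your proposal is correct and follows essentially the same route as the paper: the paper's (implicit) proof is exactly the groupoid conjugation $a=a_2^{-1}ba_1$ from the paragraph preceding the corollary, combined with the classification of twists between trivial skew braces from Theorem \ref{T:GrpTheoretic}. Your explicit verification of \eqref{Eq:AnyTwist} — using $x\star\sigma_x(y)=x.y$, the $\star$-homomorphism property of $f_{x.y}$, and the fixed-point condition $f_{x.y}(x.y)=x.y$ — correctly fills in the computation the paper leaves to the reader.
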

\section{Co-twists on Hopf Algebras in $\SL$}\label{S:SL}
In this section we adapt the notation of \cite{ghobadi2020skew} where we described skew braces as remnants of CQHAs in $\SL$ and study Drinfeld co-twists on these Hopf algebras. In particular, we look at co-twists on the $\SL$-FRT algebras, which were constructed in Section 5 of \cite{ghobadi2020skew}. The reader should firstly note that the Hopf algebras we will describe are not linear and are Hopf algebra objects in the category $\SL$ and secondly that there are two FRT-type algebras which we are concerned with: $H_{\omega}$ denotes the $\SL$-FRT algebra of a non-degenerate YBE solution, which recovers the universal skew brace of the solution as its remnant, while $H_{\omega_{m}}$ denotes the FRT-type algebra defined for a given skew brace, which recovers the skew brace itself as its remnant. From this point onwards, we will assume that all YBE solutions of concern are non-degenerate.

A \emph{Drinfeld co-twist} on a Hopf algebra $\ct{H}$ in $\SL$ consists of a morphism $\ct{F}: \ct{H}\tn \ct{H} \rightarrow \Pl{1} $, for which there exists another morphism $\ct{F}^{-1} :\ct{H}\tn\ct{H}\rightarrow \Pl{1}$ so that
\begin{align}
\ct{F}^{-1} (a_{(1)},b_{(1)}). \ct{F} (a_{(2)},b_{(2)}) =\epsilon (a).\epsilon &(b)= \ct{F} (a_{(1)},b_{(1)})  . \ct{F}^{-1}(a_{(2)},b_{(2)})\tag{cT1} \label{Eq:ConvInv}
\\\ct{F}\left( a_{(1)}. b_{(1)},c\right).\ct{F}\left( a_{(2)},b_{(2)}\right)=&\ \ct{F}\left( a,b_{(1)}
.c_{(1)}\right).\ct{F}\left(b_{(2)}, c_{(2)}\right)\tag{cT2} \label{Eq:Twisthopf}
\\ \ct{F} (a,1) =\epsilon (a)&= F(1,a)\tag{cT3}\label{Eq:Twistcounit}
\end{align}
hold for $a,b\in \ct{H}$. The first condition \eqref{Eq:ConvInv} says that $\ct{F}$ is \emph{convolution invertible} and \eqref{Eq:Twisthopf} and \eqref{Eq:Twistcounit} are sometimes called the cocycle conditions, see Section 2.3 of \cite{majid2000foundations} for more details. Given a co-twist on a co-quasitriangular Hopf algebras $(\ct{H},\Rr)$, we obtain a new CQHA $(\ct{H}^{\ct{F}},\Rr^{\ct{F}})$, where the multiplication and co-quasitriangular structure are twisted:
\begin{align}
m^{\ct{F}} (a,b) &= \ct{F}^{-1}\left(a_{(1)},b_{(1)} \right).a_{(2)}.b_{(2)}.\ct{F}\left( a_{(1)},b_{(1)} \right) \label{Eq:MultiTwis}
\\ \Rr^{\ct{F}} (a,b) &= \ct{F}^{-1}\left(a_{(1)},b_{(1)} \right)\Rr ( a_{(2)},b_{(2)}).\ct{F}\left(b_{(3)}, a_{(3)} \right) \label{Eq:RmatTwis}
\end{align}
where $a,b\in \ct{H}$. We refer the reader to Section 2.3 of \cite{majid2000foundations}, where the proof of the dual of this statement is proved. As mentioned in Chapter 9 of \cite{majid2000foundations}, these proofs can be re-written in diagrammatic manner and dualised. 

Recall from Section 3 and 4 of \cite{ghobadi2020skew} that given a Hopf algebra $\ct{H}$ in $\SL$, we can construct a group called its remnant, $R(\ct{H})$, and any co-quasitriangular structure on $\ct{H}$ induces a braiding operator on $R(\ct{H})$. The underlying set of $R(\ct{H})$ is defined solely by the counit of the Hopf algebra $\ct{H}$. Hence, given a twist $\ct{F}$ on a CQHA, $(\ct{H},\ct{R})$, the new skew brace, $R(\ct{H}^{\ct{F}})$, which we obtain will have the same underlying set as $R(\ct{H})$. Next we will show that $\ct{F}$ in fact induces a Drinfeld twist on $R(\ct{H})$ in the sense of Section \ref{S:DriTwis}. 

Before proving this statement, let us briefly emphasize the notation used in \cite{ghobadi2020skew}. Given a CQHA $(\ct{H},\Rr)$, there is a natural quotient Hopf algebra $\ct{Q}$ with a Hopf algebra projection $\pi:\ct{H}\rightarrow \ct{Q}$. The inclusion morphism $\iota: \ct{Q}\rightarrow \ct{H}$ sends an element $\pi (a)=\ov{a}\in \ct{Q}$ to $a\vee D$, where $D= \vee \epsilon^{-1}(\emptyset)$. Lastly, the remnant group of $\ct{H}$ is identified as the basis of $\ct{Q}$ i.e. $\ct{Q}=\Pl{R(\ct{H})}$. For more details we refer the reader to Section 3 of \cite{ghobadi2020skew}.
\begin{thm}\label{T:AnyCQHA} Any co-twist $\ct{F}$ on a co-quasitriangular Hopf algebra $(\ct{H},\Rr)$ induces a twist on its remnant skew brace $R(\ct{H})$. 
\end{thm}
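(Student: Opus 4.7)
The plan is to exploit the fact that the remnant construction $R(-)$, depending only on the counit, comultiplication, and the inclusion $\iota: \ct{Q} \to \ct{H}$, is insensitive to co-twisting of the multiplication. First, since the coalgebra structure of $\ct{H}^{\ct{F}}$ coincides with that of $\ct{H}$, the quotient Hopf algebra $\ct{Q}$ and hence the underlying set $G = R(\ct{H}) = R(\ct{H}^{\ct{F}})$ are unchanged. What remains is to manufacture from $\ct{F}$ a triple $(F,\Phi,\Psi)$ satisfying Definition \ref{D:TwistSkw} on the skew brace $R(\ct{H})$, and to verify that the conjugate $FrF^{-1}$ matches the braiding operator induced by $\Rr^{\ct{F}}$.

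For the construction, I would pair $\ct{F}$ with the coproduct to obtain a morphism $\ct{H}^{\tn 2} \to \ct{H}^{\tn 2}$ sending $a\tn b$ to $\ct{F}^{-1}(a_{(1)},b_{(1)})\cdot(a_{(2)}\tn b_{(2)})\cdot\ct{F}(a_{(3)},b_{(3)})$, and show that this descends along $\pi\tn\pi$ to a well-defined map $F:G^{2}\to G^{2}$ whose inverse is produced by swapping the roles of $\ct{F}$ and $\ct{F}^{-1}$. The maps $\Phi$ and $\Psi$ on $G^{3}$ are then built by exactly the same recipe with $\ct{F}$ inserted on the $(2,3)$- and $(1,2)$-tensor factors respectively, mirroring the positions appearing in \eqref{Eq:T2} and \eqref{Eq:T3}.

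With these definitions in hand, axioms \eqref{Eq:T2} and \eqref{Eq:T3} together with \eqref{Eq:T1} follow by translating the cocycle identity \eqref{Eq:Twisthopf} and convolution invertibility \eqref{Eq:ConvInv} into equalities between maps on $G^{n}$, after applying $\pi^{\tn n}$; the counital axioms \eqref{Eq:G1} and \eqref{Eq:G2} are direct consequences of \eqref{Eq:Twistcounit}; and the multiplicative axioms \eqref{Eq:G3} and \eqref{Eq:G4} arise by applying $\pi^{\tn 2}$ to the twisted multiplication formula \eqref{Eq:MultiTwis} and matching the result against the remnant multiplication of $\ct{H}^{\ct{F}}$. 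That $FrF^{-1}$ is precisely the operator induced by $\Rr^{\ct{F}}$ via the remnant construction then follows by running the same argument on \eqref{Eq:RmatTwis}.

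The main obstacle will be carefully tracking the $\SL$-enriched content of the definitions: because $\ct{F}$ is valued in the unit object $\Pl{1}$, its action either preserves or annihilates each tensor pair, and one must verify that after forming the convolution $\ct{F}^{-1}\cdot(-)\cdot\ct{F}$ and then projecting to $\ct{Q}^{\tn n}$, exactly one pair (respectively triple) of basis elements of $\ct{Q}$ survives for every input in $G^{n}$, so that $F$, $\Phi$, and $\Psi$ are genuine bijections rather than multi-valued relations. Once this bookkeeping is settled, the remaining verifications reduce to diagrammatic manipulations that mirror, in the rigid setting of $\SL$, the classical proof that a co-twist on a CQHA yields a new CQHA.
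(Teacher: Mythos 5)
Your overall strategy is the paper's: conjugate the projected elements by $\ct{F}$ and $\ct{F}^{-1}$ through the coproduct, use \eqref{Eq:ConvInv} to show the resulting relations are genuine bijections of $R(\ct{H})^{n}$, and then translate the cocycle and counit conditions into the axioms of Definitions \ref{D:TwistYBE} and \ref{D:TwistSkw}. Your observation that the underlying set is unchanged because the remnant depends only on the counit, your definition of $F$, and your ``exactly one basis element survives'' bookkeeping (equivalently, that swapping $\ct{F}$ and $\ct{F}^{-1}$ produces a convolution-style two-sided inverse) all match the paper's argument.

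The gap is in your definitions of $\Phi$ and $\Psi$. Read literally, ``the same recipe with $\ct{F}$ inserted on the $(2,3)$- and $(1,2)$-tensor factors'' gives $\Phi=\id_{G}\times F$ and $\Psi=F\times\id_{G}$, and this cannot work: \eqref{Eq:T1} would then read $F_{12}^{2}=F_{23}^{2}$, \eqref{Eq:T2} would force $F$ to commute with $r$ (so the induced twist on the solution would be trivial), and \eqref{Eq:G3} would force $F$ to fix its first component --- none of which hold in general. The correct definition evaluates $\ct{F}$ at a \emph{product} of two of the inputs,
\begin{align*}
\Phi(\ov{a},\ov{b},\ov{c})=\ct{F}\left(a_{(1)},b_{(1)}.c_{(1)}\right).\big(\pi(a_{(2)}),\pi(b_{(2)}),\pi(c_{(2)})\big).\ct{F}^{-1}\left(a_{(3)},b_{(3)}.c_{(3)}\right),
\end{align*}
and symmetrically $\Psi$ conjugates by $\ct{F}\left(a_{(1)}.b_{(1)},c_{(1)}\right)$. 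It is precisely this choice that makes \eqref{Eq:T1} collapse to the cocycle identity \eqref{Eq:Twisthopf}, makes \eqref{Eq:G3} and \eqref{Eq:G4} follow from multiplicativity of $\pi$ (the conjugating scalar for $\Phi$ is literally the one defining $F(\ov{a},\ov{b}.\ov{c})$), and makes \eqref{Eq:T2} follow from the quasi-commutativity relation for $\Rr$ applied to the product $b.c$. A secondary point: your $F$ places $\ct{F}^{-1}$ on the first coproduct leg, which is the inverse of the paper's convention; this is harmless for the bare existence statement, since twists form a groupoid, but it reverses the direction needed to identify $R(\ct{H}^{\ct{F}})$ with the skew brace obtained from the induced twist.
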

\begin{proof} Given a co-twist $\ct{F}$, we will define the triple $(F,\Phi,\Psi)$ more generally for $\ct{Q}$ and then by demonstrating that they are invertible, it follows that they must restrict to maps on the basis elements. For $\ov{a},\ov{b},\ov{c}\in \ct{Q}$, define the triple as follows:
\begin{align*}
F(\ov{a},\ov{b})&= \ct{F}\left(a_{(1)},b_{(1)} \right).\big(\pi (a_{(2)}),\pi( b_{(2)})\big).\ct{F}^{-1}\left( a_{(3)},b_{(3)} \right)
\\ \Phi (\ov{a},\ov{b},\ov{c} )&= \ct{F}\left(a_{(1)},b_{(1)}.c_{(1)} \right).\big(\pi (a_{(2)}),\pi( b_{(2)}),\pi(c_{(2)})\big).\ct{F}^{-1}\left( a_{(3)},b_{(3)}.c_{(3)} \right)
\\ \Psi (\ov{a},\ov{b},\ov{c} )&= \ct{F}\left(a_{(1)}.b_{(1)},c_{(1)} \right).\big(\pi (a_{(2)}),\pi( b_{(2)}),\pi(c_{(2)})\big).\ct{F}^{-1}\left( a_{(3)}.b_{(3)},c_{(3)} \right)
\end{align*} 
Note that a more natural definition would have been of the form $F= (\pi\tn \pi) F' (\iota\tn\iota )$ for some $F':\ct{H}\tn\ct{H}\rightarrow \ct{H}\tn\ct{H}$. However, we can follow an argument made in the beginning of the proof of Theorem 4.2 in \cite{ghobadi2020skew}, since \eqref{Eq:ConvInv} implies that $F'$ commutes with $\epsilon\tn\epsilon$, and thereby composing with $\iota$ gives the same result.  

Now we define the inverse of $\Phi$ by 
\begin{align*}
\Phi^{-1} (\ov{a},\ov{b},\ov{c} )= \ct{F}^{-1}\left(a_{(1)},b_{(1)}.c_{(1)} \right).\big(\pi (a_{(2)}),\pi( b_{(2)}),\pi(c_{(2)})\big).\ct{F}\left( a_{(3)},b_{(3)}.c_{(3)} \right)
\end{align*}
The inverse of $F$ and $\Psi$ are obtained exactly in the same way, where $\ct{F}^{1}$ and $\ct{F}$ are swapped in their respective definitions. It then follows easily from \eqref{Eq:ConvInv}, that $\Phi^{-1}$ and $\Phi$ are inverses
\begin{align*}
\Phi^{-1}\Phi (\ov{a},\ov{b},\ov{c} ) =& \ct{F}\left(a_{(1)},b_{(1)}.c_{(1)} \right)\ct{F}^{-1}\left(a_{(2)(1)},b_{(2)(1)}.c_{(2)(1)} \right).\big(\pi (a_{(2)(2)}),\pi( b_{(2)(2)}),\pi(c_{(2)(2)})\big)
\\&.\ct{F}\left( a_{(2)(3)},b_{(2)(3)}.c_{(2)(3)} \right).\ct{F}^{-1}\left( a_{(3)},b_{(3)}.c_{(3)} \right)
\\=&  \ct{F}\left(a_{(1)},b_{(1)}.c_{(1)} \right)\ct{F}^{-1}\left(a_{(2)},b_{(2)}.c_{(2)} \right).\big(\pi (a_{(3)}),\pi( b_{(3)}),\pi(c_{(3)})\big).\ct{F}\left( a_{(4)},b_{(4)}.c_{(4)} \right)
\\&.\ct{F}^{-1}\left( a_{(5)},b_{(5)}.c_{(5)} \right)
\\=& \epsilon (a_{(1)}).\epsilon (b_{(1)}.c_{(1)}).\big(\pi (a_{(2)}),\pi( b_{(2)}),\pi(c_{(2)})\big)\epsilon (a_{(3)}).\epsilon (b_{(3)}.c_{(3)})= (\ov{a},\ov{b},\ov{c} )
\end{align*}
Similar proofs hold for $F$ and $\Psi$. Hence, $(F,\Phi,\Psi)$ restrict to automorphisms on the subsets of basis elements of $\ct{Q}^{\tn 2}$ and $\ct{Q}^{\tn 3}$ i.e. $R(\ct{H})^{2}$ and $R(\ct{H})^{3}$. Next we demonstrate that the axioms of Definition \ref{D:TwistSkw} hold. 

First, we recall that the unit of the group $R(\ct{H})$ is $\ov{1}$ and by \eqref{Eq:Twistcounit}, it follows directly that \eqref{Eq:G1} and \eqref{Eq:G2} hold. Additionally, recall that the multiplication on $R(\ct{H})$ is defined as the projection of multiplication of $\ct{H}$. Hence, if we denote the multiplication of $R(\ct{H})$ by $m$, then 
\begin{align*}
F(\ov{a},\ov{b}.\ov{c})&=F(\ov{a},\ov{b.c})= \ct{F}\left(a_{(1)},(b.c)_{(1)} \right).\big(\pi (a_{(2)}),\pi( (b.c)_{(2)})\big).\ct{F}^{-1}\left( a_{(3)},(b.c)_{(3)} \right)
\\&=\ct{F}\left(a_{(1)},b_{(1)}.c_{(1)} \right).\big(\pi (a_{(2)}),\pi( b_{(2)}.c_{(2)})\big).\ct{F}^{-1}\left( a_{(3)},b_{(3)}.c_{(3)} \right)
\\&=\ct{F}\left(a_{(1)},b_{(1)}.c_{(1)} \right).\big(\pi (a_{(2)}),\pi( b_{(2)}) .\pi (c_{(2)})\big).\ct{F}^{-1}\left( a_{(3)},b_{(3)}.c_{(3)} \right)= m_{23}\Phi (\ov{a},\ov{b}.\ov{c})
\end{align*}
holds. A symmetric argument show that \eqref{Eq:G4} also holds. 

It still remains to show that the axioms of Definition \ref{D:TwistYBE} hold i.e. we must demonstrate that $(F,\Phi,\Psi)$ defines a twist on the underlying YBE solution $(R(\ct{H}),r)$ as well. Observe that combining the fact that $\ct{F}$ is convolution invertible, \eqref{Eq:ConvInv}, and \eqref{Eq:Twisthopf} provides a symmetric relation for $\ct{F}^{-1}$: 
\begin{equation}\label{Eq:Twisthopf2}
\ct{F}^{-1}\left( a_{(1)},b_{(1)}\right).\ct{F}^{-1}\left( a_{(2)}. b_{(2)},c\right)= \ct{F}^{-1}\left(b_{(1)}, c_{(1)}\right) \ct{F}^{-1}\left( a,b_{(2)}
.c_{(2)}\right)\tag{cT2'}
\end{equation}
Condition \eqref{Eq:T1} follows directly from \eqref{Eq:Twisthopf} and \eqref{Eq:Twisthopf2}:
\begin{align*}
F_{23}\Phi  (\ov{a},\ov{b},\ov{c})=& \ct{F}\left(a_{(1)},b_{(1)}.c_{(1)} \right).\ct{F}\left( b_{(2)(1)},c_{(2)(1)} \right).\big(\pi (a_{(2)}),\pi( b_{(2)(2)}),\pi(c_{(2)(2)})\big)
\\ &.\ct{F}^{-1}\left(b_{(2)(3)},c_{(2)(3)} \right).\ct{F}^{-1}\left( a_{(3)},b_{(3)}.c_{(3)} \right)
\\=& \ct{F}\left(a_{(1)},b_{(1)}.c_{(1)} \right).\ct{F}\left( b_{(2)},c_{(2)} \right).\big(\pi (a_{(2)}),\pi( b_{(3)}),\pi(c_{(3)})\big).\ct{F}^{-1}\left(b_{(4)},c_{(4)} \right)
\\ &.\ct{F}^{-1}\left( a_{(3)},b_{(5)}.c_{(5)} \right)
\\ =&\ct{F}\left( a_{(1)}. b_{(1)},c_{(1)}\right).\ct{F}\left( a_{(2)},b_{(2)}\right)\big(\pi (a_{(3)}),\pi( b_{(3)}),\pi(c_{(2)})\big).\ct{F}^{-1}\left(a_{(4)},b_{(4)} \right)
\\ &.\ct{F}^{-1}\left( a_{(5)}.b_{(5)}.c_{(3)} \right)= F_{12}\Psi  (\ov{a},\ov{b},\ov{c})
\end{align*}
Recall from Section 4 of \cite{ghobadi2020skew} that $r$ is defined by 
\begin{equation}\label{Eq:BrdRem}
r ( \ov{a},\ov{b}) = \Rr\left(a_{(1)},b_{(1)} \right).\big(\pi( b_{(2)}),\pi (a_{(2)})\big).\Rr^{-1}\left( a_{(3)},b_{(3)} \right) 
\end{equation}
Moreover, $\Rr$ satisfies $\Rr (b_{(1)}, a_{(1)})a_{(2)}.b_{(2)}= b_{(1)}. a_{(1)} \Rr (b_{(2)} , a_{(2)})$ and as for $\ct{F}$ and \eqref{Eq:Twisthopf2}, we can obtain a symmetric relation for $\Rr^{-1}$, since $\Rr$ is convolution invertible. Using these relations we observe that \eqref{Eq:T2} holds: 
\begin{align*}
r_{23} \Phi (\ov{a},\ov{b},\ov{c})=&\ct{F}\left(a_{(1)},b_{(1)}.c_{(1)} \right).\Rr\left(b_{(2)(1)},c_{(2)(1)} \right).\big(\pi (a_{(2)}),\pi(c_{(2)(2)}),\pi( b_{(2)(2)})\big)
\\&.\Rr^{-1}\left( b_{(2)(3)},c_{(2)(3)} \right) .\ct{F}^{-1}\left( a_{(3)},b_{(3)}.c_{(3)} \right)
\\ =&\ct{F}\left(a_{(1)},b_{(1)}.c_{(1)} \right).\Rr\left(b_{(2)},c_{(2)} \right).\big(\pi (a_{(2)}),\pi(c_{(3)}),\pi( b_{(3)})\big).\Rr^{-1}\left( b_{(4)},c_{(4)} \right)
\\& .\ct{F}^{-1}\left( a_{(3)},b_{(5)}.c_{(5)} \right)
\\=&\ct{F}\left(a_{(1)},b_{(1)(1)}.c_{(1)(1)} \right).\Rr\left(b_{(1)(2)},c_{(1)(2)} \right).\big(\pi (a_{(2)}),\pi(c_{(2)}),\pi( b_{(2)})\big)
\\& .\Rr^{-1}\left( b_{(3)(1)},c_{(3)(1)} \right).\ct{F}^{-1}\left( a_{(3)},b_{(3)(2)}.c_{(3)(2)} \right)
\\=&\Rr\left(b_{(1)(1)},c_{(1)(1)} \right).\ct{F}\left(a_{(1)},c_{(1)(2)}.b_{(1)(2)}\right).\big(\pi (a_{(2)}),\pi(c_{(2)}),\pi( b_{(2)})\big)
\\& .\ct{F}^{-1}\left( a_{(3)},c_{(3)(1)}.b_{(3)(1)} \right).\Rr^{-1}\left( b_{(3)(2)},c_{(3)(2)} \right)
\\=&\Rr\left(b_{(1)},c_{(1)} \right).\ct{F}\left(a_{(1)},c_{(2)(1)}.b_{(2)(1)} \right).\big(\pi (a_{(2)}),\pi(c_{(2)(2)}),\pi( b_{(2)(2)})\big)
\\& .\ct{F}^{-1}\left( a_{(3)},c_{(2)(3)} .b_{(2)(3)}\right).\Rr^{-1}\left( b_{(3)},c_{(3)} \right)= \Phi r_{23} ( \ov{a},\ov{b},\ov{c})
\end{align*}
A symmetric argument shows that \eqref{Eq:T3} holds. 
\end{proof}

\begin{corollary}\label{C:HopfTwist} Let $\ct{F}$ be a a co-twist on $(\ct{H},\ct{R})$, and $(F,\Phi,\Psi)$ the induced twist on $R(\ct{H})$, by Theorem \ref{T:AnyCQHA}. The induced skew brace structure induced on $R(\ct{H}^{\ct{F}})$ agrees with that induced by the twist $(F,\Phi,\Psi)$.
\end{corollary}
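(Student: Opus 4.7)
The corollary asserts the equality of two a priori distinct skew brace structures on the common underlying set $R(\ct{H})=R(\ct{H}^{\ct{F}})$: the one obtained by applying the remnant construction directly to the twisted Hopf algebra $\ct{H}^{\ct{F}}$, and the one produced by Theorem~\ref{T:TwistSkw} from the induced twist $(F,\Phi,\Psi)$ on $R(\ct{H})$. The plan is to verify separately that the two group multiplications coincide, and that the two braiding operators coincide.

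For the group operation, I would project formula \eqref{Eq:MultiTwis} to $\ct{Q}$. Since $\pi$ is a coalgebra map and the scalars $\ct{F}^{\pm 1}(a,b)\in \Pl{1}$ act centrally, the multiplication on $R(\ct{H}^{\ct{F}})$ becomes
\[
m^{\ct{F}}(\ov{a},\ov{b})=\ct{F}^{-1}(a_{(1)},b_{(1)})\cdot m\bigl(\pi(a_{(2)}),\pi(b_{(2)})\bigr)\cdot \ct{F}(a_{(3)},b_{(3)}).
\]
Expanding $m F^{-1}(\ov{a},\ov{b})$ using the explicit formula for $F^{-1}$, which is obtained from the definition of $F$ in the proof of Theorem~\ref{T:AnyCQHA} by exchanging $\ct{F}$ and $\ct{F}^{-1}$ in the outer and inner positions, yields exactly the same expression.

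For the braiding, I would first establish that the convolution inverse of $\Rr^{\ct{F}}$ is
\[
(\Rr^{\ct{F}})^{-1}(a,b) = \ct{F}^{-1}(b_{(1)},a_{(1)})\cdot \Rr^{-1}(a_{(2)},b_{(2)})\cdot \ct{F}(a_{(3)},b_{(3)}),
\]
via a short calculation using \eqref{Eq:ConvInv} for $\ct{F}$ and $\Rr$ together with the naturality identity $\Rr(b_{(1)}, a_{(1)}) a_{(2)} b_{(2)} = b_{(1)} a_{(1)} \Rr(b_{(2)}, a_{(2)})$ already invoked in the proof of Theorem~\ref{T:AnyCQHA}. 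Substituting this and $\Rr^{\ct{F}}$ from \eqref{Eq:RmatTwis} into formula \eqref{Eq:BrdRem} applied to $\ct{H}^{\ct{F}}$ produces a seven-factor expression in the Sweedler components of $a$ and $b$. Unraveling $FrF^{-1}(\ov{a},\ov{b})$ directly from the definitions of $F$, $F^{-1}$ and $r$, and using coassociativity to flatten the nested coproducts arising from the inner application of $F$, produces the same expression: the outer $\ct{F}^{-1}\Rr\ct{F}$ on the left and $\ct{F}^{-1}\Rr^{-1}\ct{F}$ on the right sandwich the flipped projection, with the middle $\ct{F}^{\pm 1}$-factors carrying swapped arguments due to the flip inherent in $r$.

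The only real obstacle beyond bookkeeping lies in verifying the formula for $(\Rr^{\ct{F}})^{-1}$ and in carefully aligning the Sweedler indices on both sides; once written in the same linear order of scalars, the two presentations of the twisted braiding visibly coincide, completing the argument.
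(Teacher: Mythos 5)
Your proposal is correct and follows essentially the same route as the paper: the multiplication is compared by projecting \eqref{Eq:MultiTwis} to $\ct{Q}$ and matching it against $mF^{-1}$, and the braiding is compared by substituting $\Rr^{\ct{F}}$ and its convolution inverse (with the same flipped-argument formula you state) into \eqref{Eq:BrdRem} and flattening the Sweedler indices by coassociativity to recognise $FrF^{-1}$. The only cosmetic difference is that the paper routes the multiplication computation through $\iota(\ov{a})=a\vee D$ rather than through $\pi$ directly, which changes nothing of substance.
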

\begin{proof} First recall that the underlying set of the remnant is solely dependent on the counit and thereby $G=R(H_{\omega_{m}})= R(H^{\ct{F}})$ as sets. Let us denote the twisted product on $R(\ct{H}^{\ct{F}})$ by $.^{\ct{F}}$, then by \eqref{Eq:MultiTwis} we have that 
\begin{align*}
\ov{a}.^{\ct{F}} \ov{b}=& \pi m^{\ct{F}} (a\vee D,b\vee D) = \ct{F}^{-1}\left(a_{(1)}\vee D_{(1)},b_{(1)}\vee D_{(1)}\right).\pi m\big(a_{(2)}\vee D_{(2)},b_{(2)}\vee D_{(2)}\big)
\\&.\ct{F}\left( a_{(3)}\vee D_{(3)},b_{(3)}\vee D_{(3)}\right)
\\= & \ct{F}^{-1}\left(a_{(1)},b_{(1)}\right).\pi m\big(a_{(2)}\vee D,b_{(2)}\vee D\big).\ct{F}\left( a_{(3)},b_{(3)}\right) = \pi m (\iota \tn \iota) F^{-1}(\ov{a},\ov{b})
\end{align*}
What remains to be shown is that $r^{\ct{F}}= FrF^{-1}$, where we denote the induced braiding operator on $R(H^{\ct{F}})$ by $r^{\ct{F}}$, which follows by \eqref{Eq:RmatTwis} and \eqref{Eq:BrdRem}: 
\begin{align*}
r^{\ct{F}}( \ov{a},\ov{b}) =& \ct{F}^{-1}\left(a_{(1)(1)},b_{(1)(1)} \right)\Rr ( a_{(1)(2)},b_{(1)(2)}).\ct{F}\left(b_{(1)(3)}, a_{(1)(3)} \right).\big(\pi( b_{(2)}),\pi (a_{(2)})\big)
\\&\ct{F}^{-1}\left(b_{(3)(1)},a_{(3)(1)} \right)\Rr^{-1}\left( a_{(3)(2)},b_{(3)(2)} \right) .\ct{F}\left( a_{(3)(3)},b_{(3)(3)} \right)
\\=&\ct{F}^{-1}\left(a_{(1)},b_{(1)} \right)\Rr ( a_{(2)},b_{(2)}).\ct{F}\left(b_{(3)(1)}, a_{(3)(1)} \right).\big(\pi( b_{(3)(2)}),\pi (a_{(3)(2)})\big)
\\&\ct{F}^{-1}\left(b_{(3)(3)},a_{(3)(3)} \right)\Rr^{-1}\left( a_{(4)},b_{(4)} \right) .\ct{F}\left( a_{(5)},b_{(5)} \right)
\\=&\ct{F}^{-1}\left(a_{(1)},b_{(1)} \right)\Rr ( a_{(2)},b_{(2)}).F \big(\pi( b_{(3)}),\pi (a_{(3)})\big).\Rr^{-1}\left( a_{(4)},b_{(4)} \right) .\ct{F}\left(a_{(5)}, b_{(5)}\right)
\\=&\ct{F}^{-1}\left(a_{(1)},b_{(1)} \right).Fr \big(\pi( a_{(2)}),\pi (b_{(2)})\big).\ct{F}\left(b_{(3)}, a_{(3)} \right)= FrF^{-1}( \ov{a},\ov{b}) \qedhere
\end{align*} 
\end{proof}
Each skew brace can arise as the remnant of non-isomorphic CQHAs in $\SL$. It is not clear whether every twist on a skew brace can be extended to co-twists on some CQHA which recovers it as its remnant. In what follows we will discuss co-twists on the two families of Hopf algebras in $\SL$ which were studied in \cite{ghobadi2020skew} and comment on the obstruction of extending a twist on the remnant to a co-twist on these algebras. 

In Section 5.1 of \cite{ghobadi2020skew}, we constructed the $\SL$-FRT algebra $H_{\omega}$ corresponding to a YBE solution $(X,r)$. As an algebra in $\SL$, $H_{\omega}$ is generated by elements $(x,y)_{1}$ and $(x,y)_{2}$, corresponding to pairs $x,y\in X$, and has relations
\begin{align} 
\vee_{a\in X} \st{(x,a)_{1}.(x,a)_{2}}=1= \vee_{a\in X}\st{(a,x)_{2}.(a,x)_{1}}\label{Eq:HwInv}
\\(x,a)_{1}.(y,a)_{2}= \emptyset=(a,x)_{2}.(a,y)_{1},\ \text{when }x\neq y\label{Eq:Hwzero}
\\(x,y)_{1}.(a,b)_{1}= (\sigma_{x}(a), \sigma_{y}(b) )_{1}.(\gamma_{a}(x), \gamma_{b}(y))_{1} \label{Eq:HwBrd}
\end{align}
for $a,b,x,y\in X$. We refer the reader to Section 5.1 of \cite{ghobadi2020skew} for additional details and the definitions of the structural maps of the Hopf algebra. It was also shown that the remnant of the $\SL$-FRT algebra of a solution recovers the universal skew brace of the solution. Consequently, by Theorem~\ref{T:AnyCQHA} co-twists on the $\SL$-FRT algebra $H_{\omega}$ of a solution induce twists on its universal skew brace. However, in \cite{ghobadi2020skew} we noted that $H_{\omega}$ carries much more information about the solution compared to the universal skew brace. This is confirm in the study of twists since any co-twist on $\SL$-FRT algebra of a solution also induces a twist on the YBE solution itself. 

\begin{thm}\label{T:TwistHw} If $H_{\omega}$ denotes the $\SL$-FRT algebra of a set-theoretical YBE solution $(X,r)$, then a Drinfeld co-twists on $H_{\omega}$ are induces a Drinfeld twists on $(X,r)$, as in Definition \ref{D:TwistYBE}. 
\end{thm}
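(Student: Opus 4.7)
The strategy is to extract the maps $F,\Phi,\Psi$ directly from the values of the co-twist $\ct{F}$ on the generators of $H_{\omega}$. Since $\Pl{1}=\st{\emptyset, 1}$, evaluating $\ct{F}$ on a pair of generators returns a single truth value, and the plan is to read off the desired bijections from the ``support'' of $\ct{F}$. Concretely, I define $F : X^{2} \rightarrow X^{2}$ to be the assignment sending $(x,y)$ to the unique pair $(u,v)$ with $\ct{F}\big((x,u)_{1},(y,v)_{1}\big) = 1$; likewise I define $\Phi(x,y,z)$ by evaluating $\ct{F}$ on $(x,u)_{1}\otimes (y,v)_{1}.(z,w)_{1}$ and $\Psi(x,y,z)$ by evaluating on $(x,u)_{1}.(y,v)_{1}\otimes (z,w)_{1}$.

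The first task is well-definedness and bijectivity. Using the matrix coproduct $\Delta((x,y)_{1})=\vee_{a\in X}(x,a)_{1}\otimes (a,y)_{1}$ and the counit $\epsilon((x,y)_{1})=\st{x=y}$ from Section 5.1 of \cite{ghobadi2020skew}, the convolution identity \eqref{Eq:ConvInv} applied to pairs of generators forces the $\SL$-valued matrix of values of $\ct{F}$ to be the graph of a bijection $F$ of $X^{2}$, with the inverse bijection encoded by $\ct{F}^{-1}$. The orthogonality relations \eqref{Eq:HwInv} and \eqref{Eq:Hwzero} linking the $(\sbt,\sbt)_{1}$ and $(\sbt,\sbt)_{2}$ generators guarantee that this extraction is compatible with the $\SL$-structure. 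An analogous argument, applied after expanding the coproducts of $(y,v)_{1}.(z,w)_{1}$ and $(x,u)_{1}.(y,v)_{1}$, produces bijections $\Phi$ and $\Psi$ of $X^{3}$.

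With the triple $(F,\Phi,\Psi)$ constructed, the axiom \eqref{Eq:T1} follows directly from the cocycle identity \eqref{Eq:Twisthopf} evaluated on the triple $(x,u)_{1}\otimes (y,v)_{1}\otimes (z,w)_{1}$: after expanding the coproducts and using that $\ct{F}$ takes only $0$-$1$ values, both sides collapse to the same indicator and one reads off $F_{23}\Phi = F_{12}\Psi$. For \eqref{Eq:T2} and \eqref{Eq:T3}, the FRT relation \eqref{Eq:HwBrd} is invoked to rewrite $(y,v)_{1}.(z,w)_{1}$ inside the argument of $\ct{F}$ in terms of $r$-twisted generators; substituting into the definition of $\Phi$ shows that pre- and post-composition with $\id_{X}\times r$ yield identical outputs, which is precisely \eqref{Eq:T2}, and the left-multiplicative version \eqref{Eq:HwBrd} in the first factor delivers \eqref{Eq:T3} symmetrically.

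The principal obstacle is the first step: making rigorous the claim that the $\SL$-valued matrix $\ct{F}((x,u)_{1},(y,v)_{1})$ is the graph of a genuine function $X^{2}\rightarrow X^{2}$ rather than some weaker relational datum. Here the specific form of $H_{\omega}$ is essential; the orthogonality relations \eqref{Eq:HwInv} and \eqref{Eq:Hwzero} play the role of the ``unitarity'' conditions that turn a convolution-invertible $\SL$-matrix into a set-theoretic bijection. Once bijectivity is secured, the remainder of the argument is a straightforward refinement of the matrix-coefficient computation appearing in the proof of Theorem~\ref{T:AnyCQHA}, now performed directly at the level of generators rather than on the remnant $R(\ct{H})$.
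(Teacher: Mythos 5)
Your proposal follows essentially the same route as the paper: define $F,\Phi,\Psi$ by the support of $\ct{F}$ on the $(\sbt,\sbt)_{1}$ generators, deduce that these relations are bijections via composition of relations using \eqref{Eq:ConvInv} and the matrix coproduct, obtain \eqref{Eq:T1} from the cocycle condition \eqref{Eq:Twisthopf}, and get \eqref{Eq:T2}, \eqref{Eq:T3} from the FRT relation \eqref{Eq:HwBrd}. The only cosmetic difference is that the paper's bijectivity argument uses \eqref{Eq:ConvInv} alone and does not need the orthogonality relations \eqref{Eq:HwInv} and \eqref{Eq:Hwzero}, which you invoke but which in fact play no role at that step.
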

\begin{proof} Let $\ct{F}:H_{\omega}\tn H_{\omega}\rightarrow \Pl{1}$ denote a co-twist on $H_{\omega}$. We define the triple $(F,\Phi,\Psi )$ as follows: 
\begin{align*}
F(x,y)= (a,b)&\text{ if and only if }\ct{F}\big(\st{ (x,a)_{1},(y,b)_{1}}\big)=1
\\ \Phi (x,y,z) =(a,b,c) &\text{ if and only if }\ct{F} \big( \st{(x,a)_{1},(y,b)_{1}. (z,c)_{1}}\big) =1
\\ \Psi (x,y,z) =(a,b,c) &\text{ if and only if }\ct{F} \big( \st{(x,a)_{1}.(y,b)_{1}, (z,c)_{1}}\big) =1
\end{align*}
We must first demonstrate that $F,\Phi$ and $\Psi$ are indeed maps, rather than relations between $X^{2}$ and itself. By \eqref{Eq:ConvInv}, we have that for a fixed $a,b,a',b'\in X$:
\begin{align*}
\bigvee_{m,n\in X}\ct{F}\big(\st{ (a,m)_{1},(b,n)_{1}}&\big). \ct{F}^{-1}\big(\st{ (m,a')_{1},(n,b')_{1}}\big)= \delta_{a,a'}.\delta_{b,b'}
\\ =&\bigvee_{m,n\in X}\ct{F}^{-1}\big(\st{ (a,m)_{1},(b,n)_{1}}\big). \ct{F}\big(\st{ (m,a')_{1},(n,b')_{1}}\big)
\end{align*}
Consequently, if we let $F^{-1}(x,y):= (a,b)$ if and only if $\ct{F}^{-1}\big(\st{ (x,a)_{1},(y,b)_{1}}\big)=1$, then by the above equation $FF^{-1}= F^{-1}F=\id_{X^{2}}$. Hence, both $F$ and $F^{-1}$ must be bijective maps, since a relation $\mathsf{R}\subset A\times B$ between two sets is invertible by composition of relations, if and only if it is represents a bijective map. Similarly, \eqref{Eq:ConvInv} implies that $\Phi$ and $\Psi$ as we defined them are both well-defined and bijective maps from $X^{3}$ to itself.

Now we substitute $a= (a,a')_{1} $, $b=(b,b')_{1}$ and $c=(c,c')$ in equation \eqref{Eq:Twisthopf}, for arbitrary $a,a',b,b',c,c'\in X$ and observe that the left hand side 
\begin{align*}
\bigvee_{m,n\in X}\ct{F}\big( \st{(a,m)_{1}. (b,n)_{1},(c,c')_{1}}\big).\ct{F}\big(\st{ (m,a')_{1},(n,b')_{1}}\big)
\end{align*}
is equal to $1$ if and only if $(a',b',c')= F_{12}\Psi (a,b,c)$, while the right hand side
\begin{align*}
\bigvee_{m,n\in X}\ct{F}\big( \st{(a,a')_{1},(b,m)_{1}.(c,n)_{1}}\big).\ct{F}\big(\st{ (m,b')_{1},(n,c')_{1}}\big)
\end{align*}
is equal to $1$ if and only if $(a',b',c')= F_{23}\Phi (a,b,c)$. Hence, \eqref{Eq:T1} holds. One the other hand, by the relations imposed in $H_{\omega}$, we have that 
$$\ct{F} \big( \st{(x,a)_{1},(y,b)_{1}. (z,c)_{1}}\big)= \ct{F} \big( \st{\sigma_{x}(y), \sigma_{a}(b) )_{1}.(\gamma_{y}(x), \gamma_{b}(a))_{1}. (z,c)_{1}}\big)$$
This directly implies that $r_{12}\Psi =  \Psi r_{12}$ and that \eqref{Eq:T3} holds. A symmetric argument implies that \eqref{Eq:T2} holds. 
\end{proof}

Note that the converse of the above statement is not true i.e. not every twist on a YBE solution can be realised as a co-twist on $H_{\omega}$. Firstly, co-twists on $H_{\omega}$ are built from twists on the underlying YBE solution which lift to its universal skew brace and we have already seen counterexamples for such situations in Example \ref{E:Universal}. Additionally, for a twist $(F,\Phi,\Psi)$ to extend to a co-twist on $H_{\omega}$, we require a lot more data. If we call the multiplication of $n$ generators of $H_{\omega}$, a word of order $n$, then the triple $(F,\Phi,\Psi)$ only determine the action of $\ct{F}$ on pairs of words of orders $(1,1)$, $(1,2)$ and $(2,1)$, respectively. For a co-twist $\ct{F}$ on $H_{\omega}$, we need the additional datum of its action on pairs of words of any order, which will have to be compatible with the triple via \eqref{Eq:Twisthopf}. Section III of \cite{kulish2000twisting} provides a full discussion of this in the linear setting, while \cite{doikou2021set} also addresses a weaker notion of twists on FRT-type algebras coming from the linearisation of involutive set-theoretical YBE solutions. The new ingredient for extending a twist to the $\SL$-FRT construction, is how $\ct{F}$ is defined on generators of the form $(x,y)_{2}$ in $H_{\omega}$. These actions will be uniquely determined by the action of $\ct{F}$ on generators $(x,y)_{1}$, but will impose a \emph{non-degeneracy} condition on $F$, as discussed in our comments after Theorem \ref{T:TwistYBE}.

In Section 5.2 of \cite{ghobadi2020skew}, it was also shown that any group  $(G,m,e)$ with a braiding operator $r$ appears as the remnant of a CQHA in $\SL$. The relevant Hopf algebra was denoted by $H_{\omega_{m}}$ and is the quotient of the $\SL$-FRT algebra of the underlying YBE solution $(G,r)$, with additional relations $(e,e)_{1}= 1= (e,e)_{2}$ and 
 \begin{align}
 (a.b,c)_{1}= \bigvee_{d,f\in G\ \text{with}\ d.f=c}\st{(a,d )_{1}. (b,f)_{1}}\quad \text{ and } \quad (a,b.c)_{2}= \bigvee_{d,f\in G\ \text{with}\ d.f=a} \st{(f,c)_{2}.(d,b )_{2}}\label{Eq:Hwm}
 \end{align}
for any $a,b,c\in G$. By Theorem \ref{T:AnyCQHA}, any co-twist on $H_{\omega_{m}}$ induces a twist on the skew brace $G$. Alternatively, since $H_{\omega_{m}}$ is defined in a similar fashion to $H_{\omega}$, one could define the triple $(F,\Phi,\Psi)$ as done in Theorem \ref{T:TwistHw}. Nevertheless, it's easy to observe that by definition both methods would provide the same triple. As in the proof of Theorem \ref{T:TwistHw}, the latter method provides a straightforward way to derive the definitions in Section \ref{S:DriTwis}, directly by studying the $\SL$-FRT algebras. As we will see in the next section, this is the key advantage which the $\SL$-FRT algebras posses over the classically considered groups with unique factorisation, when it comes to the study of set-theoretical YBE solutions.

\section{Co-twists and Groups with Unique Factorisation}\label{S:Groups}
In this section, we will classify co-twist on dualizable Hopf algebras in $\SL$ and thereby describe the induced Drinfeld twists on their remnants. As described in Section 4.1 of \cite{ghobadi2020skew}, dualizable Hopf algebras in $\SL$ are all of the form $\Pl{G}$, where $G$ is a group with unique factorization $G=G_{+}.G_{-}$ and the Hopf algebra structure of  $\Pl{G}$ is given as the bicrossproduct of the group algebra $\Pl{G_{-}}$ and the function algebra $\Pl{G_{+}}$. This theory was developed first in \cite{LYZ1} and quasitriangular structures on these Hopf algebras were classified in \cite{LYZ2}. In this section, we will adapt the notation of \cite{LYZ2} on groups with unique factorisation, which are also called matched pairs of groups. We refer the reader to Section 4.1 of \cite{ghobadi2020skew} for additional details on the re-phrasing of the results of \cite{LYZ1,LYZ2} in the setting of $\SL$. 

Throughout this section we will assume $G=G_{+}.G_{-}$ is a group with unique factorisation unless stated otherwise and $\Pl{G}$ denotes the relevant Hopf algebra in $\SL$. 
\begin{lemma}\label{L:ConvInv} Convolution invertible $\ct{F}: \Pl{G} \tn \Pl{G}\rightarrow \Pl{1}$ i.e. $\ct{F}$ which satisfy \eqref{Eq:ConvInv} are in bijection with maps $\Theta:G_{-}\times G_{-}\rightarrow G_{+}\times G_{+}$ such that 
\begin{equation}\label{Eq:TwistG+G-}
F_{\Theta} :( g_{-},h_{-}) \rightarrow \left( \prescript{\Theta_{1}( g_{-},h_{-})}{}{g_{-}}, \prescript{\Theta_{2}( g_{-},h_{-})}{}{h_{-}}\right)
\end{equation} 
defines a bijection from $G_{-}\times G_{-}$ to itself, where we denote $\Theta ( g_{-},h_{-})= \big(\Theta_{1}( g_{-},h_{-}), \Theta_{2}( g_{-},h_{-})\big)$.
\end{lemma}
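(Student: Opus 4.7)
My proof plan begins by recalling that a morphism $\ct{F}:\Pl{G}\tn\Pl{G}\rightarrow\Pl{1}$ in $\SL$ is equivalent to the subset $T=\{(g,h)\in G^{2}\mid \ct{F}(\{(g,h)\})=\{*\}\}$, since $\Pl{1}$ has only two elements and $\ct{F}$ is join-preserving. Using unique factorisation $G=G_{+}.G_{-}$, each element of $G$ is encoded by a pair in $G_{+}\times G_{-}$, so $\ct{F}$ is determined by a subset of $(G_{+}\times G_{-})^{2}$. The strategy is to unpack the convolution identities \eqref{Eq:ConvInv} using the bicrossproduct coproduct on $\Pl{G}$ recalled in Section 4.1 of \cite{ghobadi2020skew}, and to read the map $\Theta$ directly off the resulting combinatorics.

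For the forward direction, I would evaluate $\ct{F}^{-1}*\ct{F}=\epsilon\tn\epsilon$ on a basis element $(\{g_{+}g_{-}\},\{h_{+}h_{-}\})$. The comultiplication of a basis element in the bicrossproduct rewrites as a join over factorisations of the $G_{+}$-component, twisted by the matched pair actions on the $G_{-}$-component. Demanding that this join collapse to $\{*\}$ only at $(g,h)=(e,e)$ and to $\emptyset$ otherwise forces $\ct{F}$ to be supported, for each $(g_{-},h_{-})$, on a single pair of $G_{+}$-components; this produces the map $\Theta:G_{-}\times G_{-}\rightarrow G_{+}\times G_{+}$. The other half of \eqref{Eq:ConvInv}, namely $\ct{F}*\ct{F}^{-1}=\epsilon\tn\epsilon$, then imposes the dual constraint: its non-vanishing summands on $(\{g\},\{h\})$ are indexed by the image of $F_{\Theta}$ applied to the $G_{-}$-data, so the identity holds if and only if $F_{\Theta}$ is a bijection of $G_{-}\times G_{-}$.

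For the converse, given $\Theta$ such that $F_{\Theta}$ is bijective, I would define $\ct{F}_{\Theta}$ by the formula dictated by $\Theta$ and define $\ct{F}_{\Theta}^{-1}$ via $F_{\Theta}^{-1}$; the two identities of \eqref{Eq:ConvInv} then reduce to $F_{\Theta}F_{\Theta}^{-1}=\id=F_{\Theta}^{-1}F_{\Theta}$ and hold automatically. The main obstacle I anticipate is the bookkeeping in the bicrossproduct coproduct calculation: carefully tracking how the left action $\prescript{g_{+}}{}{g_{-}}$ permutes the $G_{-}$-components while the $G_{+}$-components are detected by the Kronecker-style delta structure of $\Pl{G_{+}}$, and showing that this is precisely the recombination which produces the twisted form \eqref{Eq:TwistG+G-} rather than a plain pair $(g_{-},h_{-})$.
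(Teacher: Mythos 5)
Your strategy coincides with the paper's own proof: the paper likewise reads $\Theta$ off the support of $\ct{F}^{-1}$ on basis elements $a_{+}g_{-}$, uses the two halves of \eqref{Eq:ConvInv} to show that this support selects exactly one pair of $G_{+}$-components for each $(g_{-},h_{-})$, extracts a second map $\Theta'$ from the support of $\ct{F}$ in the same way, and obtains bijectivity of $F_{\Theta}$ by exhibiting $F_{\Theta'}$ as an explicit inverse. The converse direction you describe (defining $\ct{F}_{\Theta}$ and $\ct{F}_{\Theta}^{-1}$ from $\Theta$ and $F_{\Theta}^{-1}$) is also how the bijection is closed.

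The one concrete point to correct is your description of the right-hand side of \eqref{Eq:ConvInv}. The counit of the bicrossproduct $\Pl{G}$ satisfies $\epsilon(g_{+}g_{-})=1$ precisely when $g_{+}=e_{+}$; it is the indicator of the $G_{-}$-coset, not of the identity element of $G$. So the convolution join must collapse to $1$ whenever both $G_{+}$-components are trivial, for \emph{arbitrary} $g_{-},h_{-}$, and to $\emptyset$ otherwise; it is not ``$\{\ast\}$ only at $(g,h)=(e,e)$.'' As written, your condition would force $\ct{F}^{-1}\ast\ct{F}$ to vanish at $(e_{+}g_{-},e_{+}h_{-})$ for $g_{-}\neq e_{-}$, which no convolution-invertible $\ct{F}$ satisfies, and it would break the existence half of the deduction that each $(g_{-},h_{-})$ carries exactly one supporting pair of $G_{+}$-components. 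With the counit corrected, the existence-plus-uniqueness argument you sketch, and the claim that the second identity encodes surjectivity so that the two identities together are equivalent to bijectivity of $F_{\Theta}$, go through exactly as in the paper.
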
 
\begin{proof} Given a convolution invertible $\ct{F}$, we can define $\Theta (g_{-},h_{-})= (a_{+},b_{+})$ for elements where $\ct{F}^{-1}(a_{+}g_{-},b_{+}h_{-})= 1$. First, we must show that $\Theta$ is well-defined. By \eqref{Eq:ConvInv}, for any pair $g_{-},h_{-}\in G_{-}$ there exists at lease one pair $a_{+},b_{+}\in G_{+}$ such that 
$$\ct{F}^{-1}(a_{+}g_{-},b_{+}h_{-})= 1=\ct{F}\left(a^{-1}_{+}\big(\prescript{a_{+}}{}{g_{-}}\big),b^{-1}_{+}\big( \prescript{b_{+}}{}{h_{-}}\big)\right)$$
and if $\ct{F}\left( k_{+}\big(\prescript{a_{+}}{}{g_{-}}\big),l_{+}\big(\prescript{b_{+}}{}{h_{-}}\big)\right)=1$, then $k_{+}=a_{+}^{-1}$ and $l_{+}=b_{+}^{-1}$. A symmetric statement holds for $\ct{F}$ and for any $g_{-},h_{-}\in G_{-}$, there exist at least one pair $m_{+},n_{+}\in G_{+}$ such that $\ct{F}(m_{+}g_{-},n_{+}h_{-})= 1=\ct{F}^{-1}\left(m^{-1}_{+}\big(\prescript{m_{+}}{}{g_{-}}\big),n^{-1}_{+}\big(\prescript{n_{+}}{}{h_{-}}\big)\right)$. By the latter statement, for the pair $\prescript{a_{+}}{}{g_{-}}, \prescript{b_{+}}{}{h_{-}}$ there exists at least one pair $k_{+},l_{+}\in G_{+}$ satisfying $\ct{F}\left( k_{+}\big(\prescript{a_{+}}{}{g_{-}}\big),l_{+}\big(\prescript{b_{+}}{}{h_{-}}\big)\right)=1$ and by the first statement we have that $k_{+}=a_{+}^{-1}$ and $l_{+}=b_{+}^{-1}$. Hence, if $\ct{F}^{-1}(a_{+}g_{-},b_{+}h_{-})= 1$, then $\ct{F}\left(a^{-1}_{+}\big(\prescript{a_{+}}{}{g_{-}}\big),b^{-1}_{+}\big( \prescript{b_{+}}{}{h_{-}}\big)\right)=1$ and the symmetric statement holds for $\ct{F}$.
 
Now we show that for any $g_{-},h_{-}\in G_{-}$, the pair $a_{+},b_{+}$ satisfying $\ct{F}^{-1}(a_{+}g_{-},b_{+}h_{-})= 1$ is unique. Assume that the pair $k_{+},l_{+}$ also satisfies $\ct{F}^{-1}(k_{+}g_{-},l_{+}h_{-})= 1 $, then $\ct{F}\left(k^{-1}_{+}\big(\prescript{k_{+}}{}{g_{-}}\big),f^{-1}_{+}\big( \prescript{l_{+}}{}{h_{-}}\big)\right)=1$ and thereby 
$$ \ct{F}^{-1}(a_{+}g_{-},b_{+}h_{-}).\ct{F}\left(k^{-1}_{+}\big(\prescript{k_{+}}{}{g_{-}}\big),l^{-1}_{+}\big( \prescript{l_{+}}{}{h_{-}}\big)\right)=1= \epsilon (a_{+}k^{-1}_{+}g_{-} ) .\epsilon (b_{+}l^{-1}_{+}h_{-}) $$
Hence, $k_{+}=a_{+}$ and $l_{+}=b_{+}$ and thereby $\Theta$ is a well-defined map. In a symmetric way, we can define $\Theta':G_{-}\times G_{-}\rightarrow G_{+}\times G_{+}$ by $\Theta' (g_{-},h_{-})= (a_{+},b_{+})$ for $\ct{F}(a_{+}g_{-},b_{+}h_{-})= 1$. By \eqref{Eq:ConvInv}, it then follows that the maps 
\begin{align*}
F_{\Theta}(g_{-},h_{-})= \left( \tht{1}{g_{-}}{h_{-}}{g_{-}}, \tht{2}{g_{-}}{h_{-}}{h_{-}}\right), \quad F^{-1}_{\Theta}(g_{-},h_{-})= \left( \prescript{\Theta'_{1}( g_{-},h_{-})}{}{g_{-}}, \prescript{\Theta'_{2}( g_{-},h_{-})}{}{h_{-}}\right)
\end{align*}
are inverses. 
\end{proof}
Before we resume, recall from Section 4.1 of \cite{ghobadi2020skew} that the remnant of $\Pl{G}$ is the group $G_{-}$ and co-quasitriangular structures on $\Pl{G}$ are in bijection with pairs of maps $\eta, \xi: G_{-}\rightarrow G_{+}$ satisfying additional conditions. Now we calculate the induced twist data on $G_{-}$ by Theorem \ref{T:AnyCQHA}. Note that co-twist on a Hopf algebra are defined independent of any co-quasitriangular structure and should not take the braiding operator into consideration. While the $\SL$-FRT algebras depended directly on the braiding, this is not the case for $\Pl{G}$. In this case, the twists induced on $G_{-}$ by Theorem \ref{T:AnyCQHA} will automatically be compatible with any of the possible braiding operators which arise from co-quasitriangular structures on $\Pl{G}$. We will denote the units of $G_{-}$ and $G_{+}$ by $e_{-}$ and $e_{-}$, respectively.
\begin{thm}\label{T:TwistG+G-} Co-twists on $\Pl{G}$ are in bijection with maps $\Theta:G_{-}\times G_{-}\rightarrow G_{+}\times G_{+}$ such that $F_{\Theta}$, as defined in Lemma \ref{L:ConvInv}, is invertible and $\Theta$ satisfies $\Theta_{2}(e_{-},a_{-} )=e_{+}=\Theta_{1}(a_{-}, e_{-})$ and the following conditions:
\begin{align} 
\Theta_{1}\left(  \prescript{\Theta_{1} ( ab,c)}{}{a},\prescript{\Theta_{1} ( ab,c)^{a}}{}{b}\right).\Theta_{1} ( ab,c) =\Theta_{1}(a,bc )\quad \quad &
\\\Theta_{2}\left(  \prescript{\Theta_{1} ( ab,c)}{}{a},\prescript{\Theta_{1} ( ab,c)^{a}}{}{b}\right).\Theta_{1} ( ab,c)^{a}=\Theta_{1}\left( \prescript{\Theta_{2}(a,bc )}{}{b},  \prescript{\Theta_{2}(a,bc )^{b}}{}{c}\right) &.\Theta_{2}(a,bc )
\\\Theta_{2} ( ab,c)= \Theta_{2}\left( \prescript{\Theta_{2}(a,bc )}{}{b}, \prescript{\Theta_{2}(a,bc )^{b}}{}{c}\right).\Theta_{2}(a,bc )^{b}\quad  &
\end{align}
where $a,b,c\in G_{-}$ and the equations above hold in $G_{+}$.
\end{thm}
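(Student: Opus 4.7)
The plan is to leverage Lemma \ref{L:ConvInv}, which already classifies convolution-invertible $\ct{F}$ in bijection with maps $\Theta$ for which $F_{\Theta}$ is a bijection on $G_{-}\times G_{-}$. So the remaining task is to translate the counit axiom \eqref{Eq:Twistcounit} and the cocycle axiom \eqref{Eq:Twisthopf} into explicit conditions on $\Theta$.

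For the counit condition, I would evaluate $\ct{F}(\st{a_{+}g_{-}},\st{e})$ on basis singletons. Since $e=e_{+}e_{-}$ is the unique factorisation of the identity of $G$, the defining recipe of Lemma \ref{L:ConvInv} for the unit input forces $\Theta_{1}(g_{-},e_{-}) = e_{+}$ together with $\Theta_{2}(e_{-},g_{-}) = e_{+}$, reproducing the boundary constraints stated in the theorem. This step is essentially a translation and carries no further content.

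For the cocycle condition, I would specialise both sides of \eqref{Eq:Twisthopf} (or equivalently \eqref{Eq:Twisthopf2} for $\ct{F}^{-1}$) to singletons $\st{a},\st{b},\st{c}\in\Pl{G}$ with $a,b,c\in G_{-}$. The comultiplication of $\Pl{G}$ as a bicrossproduct distributes an element across a tensor factor according to its matched pair decomposition, so the products $a_{(1)} b_{(1)}$ and $b_{(1)} c_{(1)}$ unfold via the swap rule $h_{-}k_{+} = \prescript{h_{-}}{}{k_{+}} \cdot k_{+}^{h_{-}}$ governing the matched pair. Applying $\ct{F}^{-1}$ then selects precisely those $G_{+}$-coordinates dictated by $\Theta$, and equating the resulting $G_{+}$-expressions on both sides of the cocycle identity yields the three displayed relations: one governing the outer $G_{+}$-component $\Theta_{1}$ (first equation), one for $\Theta_{2}$ (third equation), and a middle equation capturing the central crossing term that arises when the left $G_{+}$-factor of the right-hand $\Theta$ must be transported past the right $G_{+}$-factor of the left-hand $\Theta$ through the matched pair actions.

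The main obstacle will be the bookkeeping in this last step: carefully tracking how nested evaluations of $\Theta_{1}$ and $\Theta_{2}$ compose with the matched pair left/right actions, and verifying that associativity in $G$, translated through the matched pair axioms, produces exactly the three displayed equations and no further constraints. Once this calculation is complete, the converse direction---that any $\Theta$ satisfying the boundary conditions and the three cocycle relations with $F_{\Theta}$ bijective assembles into a valid co-twist $\ct{F}$ via the prescription of Lemma \ref{L:ConvInv}---follows by reversing each of the steps above, since every translation in the forward direction is a chain of equivalences.
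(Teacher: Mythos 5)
Your proposal follows essentially the same route as the paper: use Lemma \ref{L:ConvInv} to encode $\ct{F}^{-1}$ by $\Theta$, translate \eqref{Eq:Twistcounit} into the boundary conditions, and expand the cocycle identity in the form \eqref{Eq:Twisthopf2} on singletons using the matched pair swap rule, finally equating the $G_{+}$-coordinates of both sides to obtain the three displayed identities. One correction to your counit step: the unit of the bicrossproduct $\Pl{G}$ is not the singleton $\st{e}$ but the join $\vee_{g_{+}\in G_{+}}\st{g_{+}}$ of the whole positive part, so \eqref{Eq:Twistcounit} becomes the requirement that $\bigvee_{g_{+}\in G_{+}}\ct{F}^{-1}(g_{+},a_{+}a_{-})=1$ exactly when $a_{+}=e_{+}$; writing $g_{+}=g_{+}e_{-}$ this join singles out $g_{+}=\Theta_{1}(e_{-},a_{-})$ and forces $\Theta_{2}(e_{-},a_{-})=e_{+}$, with the symmetric argument giving $\Theta_{1}(a_{-},e_{-})=e_{+}$. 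With that adjustment, the remaining bookkeeping you describe is exactly the computation carried out in the paper's proof, and your closing remark that the converse follows because each translation is an equivalence matches the paper's (implicit) treatment of that direction.
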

\begin{proof} First we recall that the unit element in $\Pl{G}$ is $\vee_{g_{+}\in G_{+}}g_{+}$ and thereby \eqref{Eq:Twistcounit} would translate to $\vee_{g_{+}\in G_{+}}\ct{F}^{-1}(g_{+}, a_{+}a_{-}) = 1$ if and only if $a_{+}=e_{+}\in G_{+}$. Equivalently, $\Theta_{2}(e_{-},a_{-} )=e_{+}$. A symmetric argument show that $\Theta_{1}(a_{-}, e_{-})=e_{+}$. Since $\Theta$ is formulated in terms of $\ct{F}^{-1}$, we will look at the equivalent condition to \eqref{Eq:Twisthopf} in terms of $\ct{F}^{-1}$, \eqref{Eq:Twisthopf2}. Let $a,b,c\in G$, then the left hand side of equation \eqref{Eq:Twisthopf2} expands as follows:
\begin{align*}
\bigvee_{g,h\in G_{+}}\ct{F}^{-1}&\left( a_{+}g^{-1}\big( \prescript{g}{}{a_{-}}\big),b_{+}h^{-1}\big( \prescript{h}{}{b_{-}}\big)\right).\ct{F}^{-1}\left( ga_{-}. hb_{-},c_{+}c_{-}\right)
\\ &=\bigvee_{g\in G_{+}}\ct{F}^{-1}\left( a_{+}g^{-1}\big( \prescript{g}{}{a_{-}}\big),b_{+}(g^{a_{-}})^{-1}\big( \prescript{g^{a_{-}}}{}{b_{-}}\big)\right).\ct{F}^{-1}\left( ga_{-}b_{-},c_{+}c_{-}\right)
\end{align*}
The equation is equal to $1$ if and only if $a_{+}= \Theta_{1}\left(  \prescript{g}{}{a_{-}},\prescript{g^{a_{-}}}{}{b_{-}}\right).g$, $b_{+}= \Theta_{2}\left(  \prescript{g}{}{a_{-}},\prescript{g^{a_{-}}}{}{b_{-}}\right).g^{a_{-}}$, $c_{+}= \Theta_{2} ( a_{-}b_{-},c_{-})$ and $g=\Theta_{1} ( a_{-}b_{-},c_{-})$. While the right hand side of \eqref{Eq:Twisthopf2} expands as
\begin{align*}
\bigvee_{g,h\in G_{+}}\ct{F}^{-1}&\left(b_{+}g^{-1}\big( \prescript{g}{}{b_{-}}\big), c_{+}h^{-1}\big( \prescript{h}{}{c_{-}}\big)\right) \ct{F}^{-1}\left( a_{+}a_{-},gb_{-}.hc_{-}\right)
\\ &=\bigvee_{g\in G_{+}}\ct{F}^{-1}\left(b_{+}g^{-1}\big( \prescript{g}{}{b_{-}}\big), c_{+}(g^{b_{-}})^{-1}\big( \prescript{g^{b_{-}}}{}{c_{-}}\big)\right) \ct{F}^{-1}\left( a_{+}a_{-},gb_{-}c_{-}\right)
\end{align*} 
which will be equal to $1$ if and only if $b_{+}=\Theta_{1}\left( \prescript{g}{}{b_{-}},  \prescript{g^{b_{-}}}{}{c_{-}}\right) .g$, $c_{+}= \Theta_{2}\left( \prescript{g}{}{b_{-}}, \prescript{g^{b_{-}}}{}{c_{-}}\right).g^{b_{-}}$, $a_{+}= \Theta_{1}(a_{-},b_{-}c_{-} )$ and $g=\Theta_{2}(a_{-},b_{-}c_{-} )$. The equalities in Theorem then simply arise as equating the value of $a_{+},b_{+}$ and $c_{+}$ from the left and right hand side of \eqref{Eq:Twisthopf2}, respectively. 
\end{proof}
The works \cite{LYZ1,LYZ2} focus on linear Hopf algebras with \emph{positive bases} and show that all such Hopf algebras come from finite groups with unique factorisation, and classify all \emph{positive} quasitriangular structures on them. The proofs in these works consist of two parts. The first part is finding an isomorphism so that the structural maps don't have any scalar factor, which basically translates the problem into a problem about Hopf algebras on free lattices. The second part of their proofs classify such structures and fully translate into $\SL$, see Section 4.1 of \cite{ghobadi2020skew}. Hence, we expect that the above classification can be re-phrased as classifying all \emph{positive} Drinfeld co-twists on linear Hopf algebras with positive bases. 
\begin{corollary}\label{C:G+G-} Given a co-twist on $\Pl{G}$ defined by a map $\Theta$, we obtain a twist $(F_{\Theta},\Phi_{\Theta},\Psi_{\Theta})$ on the group $G_{-}$, where $F_{\Theta}$ is as defined in Lemma \ref{L:ConvInv} and the other two maps are defined by 
\begin{align}
\Phi_{\Theta} (a,b,c) &=\left(\tht{1}{a}{bc}{a}, \tht{2}{a}{bc}{b},\prescript{\Theta_{2}(a,bc)^{b}}{}{c}\right) \label{Eq:PhiTheta}
\\ \Psi_{\Theta} (a,b,c) &=\left(\tht{1}{ab}{c}{a}, \prescript{\Theta_{1}(ab,c)^{a}}{}{b},\tht{2}{ab}{c}{b}\right) \label{Eq:PsiTheta}
\end{align}
for $a,b,c\in G_{-}$. In particular, the twist on $G_{-}$ is independent of $\eta$ and $\xi$  and the braiding operator on $G_{-}$.
\end{corollary}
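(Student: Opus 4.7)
The result is an immediate unpacking of Theorem \ref{T:AnyCQHA} specialized to the dualizable Hopf algebra $\Pl{G}$. The plan is to substitute the characterization of convolution invertible $\ct{F}$ provided by Lemma \ref{L:ConvInv} and Theorem \ref{T:TwistG+G-} into the formulas defining $(F,\Phi,\Psi)$ in the proof of Theorem \ref{T:AnyCQHA}, and to simplify using the bicrossproduct coproduct of $\Pl{G}$.

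For $F$, the claim is essentially what was already established in Lemma \ref{L:ConvInv}: for basis elements $\ov{a},\ov{b} \in \ct{Q} = \Pl{G_{-}}$ corresponding to $a,b\in G_{-}$, the expression $\ct{F}(a_{(1)},b_{(1)}).(\pi(a_{(2)}),\pi(b_{(2)})).\ct{F}^{-1}(a_{(3)},b_{(3)})$ collapses, via the constraint defining $\Theta$, to the single pair $F_{\Theta}(a,b)$ of the stated form. For $\Phi$, one would expand the formula from the proof of Theorem \ref{T:AnyCQHA},
\[\Phi(\ov{a},\ov{b},\ov{c}) = \ct{F}\bigl(a_{(1)},b_{(1)}.c_{(1)}\bigr).\bigl(\pi(a_{(2)}),\pi(b_{(2)}),\pi(c_{(2)})\bigr).\ct{F}^{-1}\bigl(a_{(3)},b_{(3)}.c_{(3)}\bigr),\]
iterating the bicrossproduct coproduct of $\Pl{G}$ on pure $G_{-}$-elements. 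The iterated coproduct produces summands in which elements of $G_{+}$ are threaded through the $G_{-}$-factors via the matched pair action $\prescript{g_{+}}{}{(\cdot)}$ and $(\cdot)^{g_{-}}$. The conditions $\ct{F}(\cdot)=1=\ct{F}^{-1}(\cdot)$ isolate the unique summand in which the relevant $G_{+}$-factors are given by the pair $\Theta(a,bc)$, and applying $\pi$ drops the $G_{+}$-components to yield exactly \eqref{Eq:PhiTheta}. A symmetric calculation with $\ct{F}(a_{(1)}.b_{(1)},c_{(1)})$ and $\Theta(ab,c)$ produces \eqref{Eq:PsiTheta}.

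The final assertion is then immediate: the formulas in Theorem \ref{T:AnyCQHA} for $(F,\Phi,\Psi)$ involve only the coalgebra structure of $\ct{H}$, the counit, the quotient $\pi$, and $\ct{F}$, but not the co-quasitriangular structure $\Rr$. Since the pairs $(\eta,\xi)$ parametrize the choices of $\Rr$ on $\Pl{G}$, and since the braiding operator on $G_{-}$ is built from $\Rr$ by \eqref{Eq:BrdRem}, the induced twist depends only on $\Theta$, as claimed. The principal bookkeeping obstacle is tracking how the matched pair action threads through the iterated coproduct on a triple of $G_{-}$-elements; this is precisely what produces the nested expressions $\prescript{\Theta_{2}(a,bc)^{b}}{}{c}$ in \eqref{Eq:PhiTheta} and $\prescript{\Theta_{1}(ab,c)^{a}}{}{b}$ in \eqref{Eq:PsiTheta}, and is the only step where one genuinely needs the conventions on $\Pl{G}$ from Section~4.1 of \cite{ghobadi2020skew}.
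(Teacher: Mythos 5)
Your proposal is correct and follows essentially the same route as the paper: specialise the formulas for $(F,\Phi,\Psi)$ from Theorem \ref{T:AnyCQHA} to $\Pl{G}$, expand the iterated bicrossproduct coproduct, use $\pi(a_{+}a_{-})=\delta_{a_{+},e_{+}}a_{-}$ together with the defining property of $\Theta$ to isolate the unique surviving summand, and invoke the identity $(\prescript{h}{}{b})(\prescript{h^{b}}{}{c})=\prescript{h}{}{(bc)}$ to produce the nested actions in \eqref{Eq:PhiTheta} and \eqref{Eq:PsiTheta}. The independence from $\eta$, $\xi$ and the braiding operator is argued identically, namely that the induced twist never references $\Rr$.
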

\begin{proof} Recall from Section 4.1 of \cite{ghobadi2020skew} that $\pi:G\rightarrow G_{-}$ is defined by $\pi (a_{+}a_{-})=\delta_{a_{+},e_{+}} a_{-}$ for $a_{+}\in G_{+}$ and $a_{-}\in G_{-}$. Hence, for $a,b\in G_{-}$ we have 
\begin{align*}
F(a&,b)= \bigvee_{g,h,k,l\in G_{+}}\ct{F}\left(g^{-1}(\prescript{g}{}{a}),h^{-1}(\prescript{h}{}{b}) \right).\big(\pi (gk^{-1}(\prescript{k}{}{a})),\pi( hl^{-1}(\prescript{l}{}{b}))\big).\ct{F}^{-1}\left( ka,lb \right)
\\ &= \bigvee_{g,h\in G_{+}}\ct{F}\left(g^{-1}(\prescript{g}{}{a}),h^{-1}(\prescript{h}{}{b}) \right).\big(\pi (\prescript{g}{}{a}),\pi( \prescript{h}{}{b})\big).\ct{F}^{-1}\left( ga,hb \right) = \big(\tht{1}{a}{b}{a}, \tht{1}{a}{b}{b}\big)= F_{\Theta}(a,b)
\end{align*}
A similar calculation follows for $\Phi_{\Theta}$ and $\Psi_{\Theta}$, using the fact that $(\prescript{h}{}{b})\left(\prescript{h^{b}}{}{c}\right)=\prescript{h}{}{(bc)}$ for $b,c\in G_{-}$ and $h\in G_{+}$ [Equation (1) in \cite{LYZ2}]:
\begin{align*}
\Phi (a,b,c )=& \bigvee_{g,h,f,k,l,m\in G_{+}}\ct{F}\left(g^{-1}(\prescript{g}{}{a}),h^{-1}(\prescript{h}{}{b}).f^{-1}(\prescript{f}{}{c}) \right).\big(\pi (gk^{-1}(\prescript{h}{}{a})),\pi( hl^{-1}(\prescript{l}{}{b})),\pi(fm^{-1}(\prescript{m}{}{b}))\big)
\\&.\ct{F}^{-1}\left(ka,lb.mc \right)
\\=& \bigvee_{g,h,f\in G_{+}}\ct{F}\left(g^{-1}(\prescript{g}{}{a}),h^{-1}(\prescript{h}{}{b}).f^{-1}(\prescript{f}{}{c}) \right).\big(\pi (\prescript{g}{}{a}),\pi( \prescript{h}{}{b}),\pi(\prescript{f}{}{b})\big).\ct{F}^{-1}\left(ga,hb.fc \right)
\\=& \bigvee_{g,h\in G_{+}}\ct{F}\left(g^{-1}(\prescript{g}{}{a}),h^{-1}(\prescript{h}{}{b})\left(\prescript{h^{b}}{}{c}\right) \right).\left(\pi (\prescript{g}{}{a}),\pi( \prescript{h}{}{b}),\pi\left(\prescript{h^{b}}{}{c}\right)\right).\ct{F}^{-1}\left(ga,hbc \right)
\\=& \left(\tht{1}{a}{bc}{a}, \tht{2}{a}{bc}{b},\prescript{\Theta_{2}(a,bc)^{b}}{}{c}\right)
\\ \Psi (a,b,c )=& \bigvee_{g,h,f,k,l,m\in G_{+}}\ct{F}\left(g^{-1}(\prescript{g}{}{a}).h^{-1}(\prescript{h}{}{b}),f^{-1}(\prescript{f}{}{c}) \right).\big(\pi (gk^{-1}(\prescript{k}{}{a})),\pi( hl^{-1}(\prescript{l}{}{b})),\pi(fm^{-1}(\prescript{m}{}{b}))\big)
\\&.\ct{F}^{-1}\left(ka.lb,mc \right)
\\=& \bigvee_{g,h,f\in G_{+}}\ct{F}\left(g^{-1}(\prescript{g}{}{a}).h^{-1}(\prescript{h}{}{b}),f^{-1}(\prescript{f}{}{c}) \right).\big(\pi (\prescript{g}{}{a}),\pi( \prescript{h}{}{b}),\pi(\prescript{f}{}{b})\big).\ct{F}^{-1}\left(ga.hb,fc \right)
\\=& \bigvee_{g,f\in G_{+}}\ct{F}\left(g^{-1}(\prescript{g}{}{a})\left(\prescript{g^{a}}{}{b}\right),f^{-1}(\prescript{f}{}{c}) \right).\left(\pi (\prescript{g}{}{a}),\pi\left( \prescript{g^{a}}{}{b}\right),\pi(\prescript{f}{}{b})\right).\ct{F}^{-1}\left(gab,fc \right)
\\= &\left(\tht{1}{ab}{c}{a}, \prescript{\Theta_{1}(ab,c)^{a}}{}{b},\tht{2}{ab}{c}{c}\right)
\end{align*} 
\end{proof}
As mentioned before the proof of this result co-quasitrianngular structures on $\Pl{G}$ are classified by maps $\eta,\xi:G_{-}\rightarrow G_{+}$ satisfying conditions of Proposition 1 in \cite{LYZ2}. Given such a pair we obtain an induced braiding operator on $G_{-}$ defined by $r:(g,h)\mapsto (\prescript{\eta(g)}{}{h},g^{\xi(h)})$. Theorem \ref{T:AnyCQHA} tells us that the triples defined above produce a twist on $(G_{-},r)$ regardless of the choice of $\eta $ and $\xi$. This can be verified directly as well and follows from the axioms which $\eta $ and $\xi$ satisfy. 

A skew brace $(G_{-},r)$ coming from a matched pair of groups can also arise as the remnant of a $\SL$-FRT algebra. However, given a twist on $(G_{-},r)$ the obstruction of extending the twist to a co-twist on the two CQHAs is very different. In the FRT case, we were missing additional information, whereas here the sole obstruction to extending a twist, $F$, to a co-twist on $\Pl{G}$ is whether it can be written as $F_{\Theta}$ for some $\Theta$, which in return involves the group $G_{+}$ and its action on $G_{-}$. 

The simplest case of groups with unique factorisation are provided by groups with braiding operators themselves. If $(G,r)$ is of the latter form, then we can define $G_{-}=G_{+}=G$ with actions 
$$\prescript{g_{+}}{}{g_{-}}= \sigma_{g_{+}}(g_{-}),\quad g_{+}^{g_{-}}= \gamma_{g_{-}}(g_{+}), \quad \prescript{g_{-}}{}{g_{+}}= \tau_{g_{+}}(g_{-}), \quad g_{-}^{g_{+}}= \rho_{g_{+}}(g_{-})$$
where $r^{-1}(x,y) =(\tau_{x}(y),\rho_{y}(x))$. Thereby, we obtain a group with unique factorisation $G_{+}.G_{-}$ on the underlying set $G\times G$. The braiding operator on $G_{-}$ is then recovered by $\eta=\xi= \id_{G}$. Hence, we can re-write Theorems \ref{T:TwistG+G-} and Corollary \ref{C:G+G-} solely in terms of $(G,r)$. 

\begin{corollary}\label{C:G-} Let $(G,m,e)$ be a group with a braiding operator. If $\Theta :G^{2}\rightarrow G^{2}$ is a map such that $F_{\Theta}:G^{2}\rightarrow G^{2}$ defined by $F_{\Theta}(g,h)= (\sigma_{\Theta_{1}(g,h)}(h), \sigma_{\Theta_{2}(g,h)}(g))$ is a bijection and satisfies $\Theta_{2}(e_{-},a_{-} )=e_{+}=\Theta_{1}(a_{-}, e_{-})$ and
\begin{align}
\Theta_{1}\left(  \sigma_{\Theta_{1} ( ab,c)}(a),\sigma_{\gamma_{a}(\Theta_{1} ( ab,c))}(b)\right).\Theta_{1} ( ab,c) =\Theta_{1}(a,bc )\quad \quad \label{Eq:Theta1}&
\\\Theta_{2} ( ab,c)= \Theta_{2}\left( \sigma_{\Theta_{2}(a,bc )}(b), \sigma_{\gamma_{b}(\Theta_{2}(a,bc ))}(c)\right).\gamma_{b}\big(\Theta_{2}(a,bc )\big)\quad  \label{Eq:Theta2}&
\\\Theta_{2}\left(  \sigma_{\Theta_{1} ( ab,c)}(a),\sigma_{\gamma_{a}(\Theta_{1} ( ab,c))}(b)\right).\gamma_{a}\big(\Theta_{1} ( ab,c)\big)=\Theta_{1}\left( \sigma_{\Theta_{2}(a,bc )}(b),  \sigma_{\gamma_{b}(\Theta_{2}(a,bc ))}(c)\right) &.\Theta_{2}(a,bc )\label{Eq:Theta3}
\end{align} 
for $a,b,c\in G$, then we obtain a new group $(G,mF^{-1}_{\Theta},e)$ with a braiding operator $F_{\Theta} r F_{\Theta}^{-1}$.
\end{corollary}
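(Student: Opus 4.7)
The plan is to derive this corollary as a direct specialization of Theorem \ref{T:TwistG+G-} and Corollary \ref{C:G+G-}, using the well-known construction that turns a group with braiding operator $(G,r)$ into a matched pair of groups on $G\times G$.

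First I would verify that setting $G_{+}=G_{-}=G$ with the four actions
$$\prescript{g_{+}}{}{g_{-}}=\sigma_{g_{+}}(g_{-}),\quad g_{+}^{g_{-}}=\gamma_{g_{-}}(g_{+}),\quad \prescript{g_{-}}{}{g_{+}}=\tau_{g_{+}}(g_{-}),\quad g_{-}^{g_{+}}=\rho_{g_{+}}(g_{-})$$
indeed defines a group with unique factorisation $G_{+}.G_{-}$; the matched pair axioms on these four maps are a direct translation of \eqref{Eq:brd1}--\eqref{Eq:brdcomm} plus the fact that $r$ is invertible with inverse $(\tau,\rho)$, as recorded in \cite{LYZ}. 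Moreover, choosing $\eta=\xi=\id_{G}$ in the classification of co-quasitriangular structures of \cite{LYZ2} recovers the braiding operator $r(g,h)=(\sigma_{g}(h),\gamma_{h}(g))$ on the remnant $G_{-}=G$, so that $\Pl{G_{+}.G_{-}}$ is a CQHA in $\SL$ with remnant skew brace $(G,r)$.

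Next I would apply Theorem \ref{T:TwistG+G-} to this specific Hopf algebra. A map $\Theta:G^{2}\rightarrow G^{2}$ determines a convolution invertible $\ct{F}$ on $\Pl{G_{+}.G_{-}}$ via Lemma \ref{L:ConvInv}, and the condition that $F_{\Theta}$ be a bijection becomes (after substituting $\prescript{(\cdot)}{}{(\cdot)}=\sigma_{\cdot}(\cdot)$) the bijectivity requirement stated in the corollary. The boundary conditions $\Theta_{2}(e,a)=e=\Theta_{1}(a,e)$ transfer verbatim. The three cocycle-type equations of Theorem \ref{T:TwistG+G-} translate into equations \eqref{Eq:Theta1}, \eqref{Eq:Theta2} and \eqref{Eq:Theta3} by rewriting every occurrence of $\prescript{(\cdot)}{}{(\cdot)}$ as $\sigma_{\cdot}(\cdot)$ and every occurrence of $(\cdot)^{(\cdot)}$ as $\gamma_{\cdot}(\cdot)$; no further content is needed, it is a mechanical substitution. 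Thus $\Theta$ defines a co-twist on $\Pl{G_{+}.G_{-}}$ if and only if the stated conditions hold.

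Finally I would invoke Corollary \ref{C:G+G-} to produce the induced twist $(F_{\Theta},\Phi_{\Theta},\Psi_{\Theta})$ on the remnant skew brace $G_{-}=G$, with $F_{\Theta}$ as in the statement and $\Phi_{\Theta}, \Psi_{\Theta}$ obtained from \eqref{Eq:PhiTheta}, \eqref{Eq:PsiTheta} under the same substitutions. Theorem \ref{T:TwistSkw} then immediately yields the new group $(G,mF_{\Theta}^{-1},e)$ with braiding operator $F_{\Theta}rF_{\Theta}^{-1}$. The only real obstacle is the careful bookkeeping in the second step: writing out the three $\Theta$-equations of Theorem \ref{T:TwistG+G-} side by side with \eqref{Eq:Theta1}--\eqref{Eq:Theta3} and checking that every left action and every right action is replaced by the correct $\sigma$ or $\gamma$, in particular that composites of the form $g_{+}^{g_{-}}$ acting again on another element unfold as $\gamma_{g_{-}}(g_{+})$ acting by $\sigma$. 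Once this dictionary is in place the proof reduces to assembling the already-established results.
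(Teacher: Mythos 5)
Your proposal is correct and matches the paper's (largely implicit) argument: the paper likewise obtains Corollary \ref{C:G-} by forming the matched pair $G_{+}=G_{-}=G$ from the braiding operator, taking $\eta=\xi=\id_{G}$ to recover $r$ on the remnant, and then specialising Theorem \ref{T:TwistG+G-} and Corollary \ref{C:G+G-} via the dictionary $\prescript{g_{+}}{}{g_{-}}=\sigma_{g_{+}}(g_{-})$, $g_{+}^{g_{-}}=\gamma_{g_{-}}(g_{+})$. Your reading of the substitution (which gives $F_{\Theta}(g,h)=(\sigma_{\Theta_{1}(g,h)}(g),\sigma_{\Theta_{2}(g,h)}(h))$, consistent with the example $\Theta(x,y)=(e,x)$ yielding $F_{\Theta}(x,y)=(x,\sigma_{x}(y))$) is the intended one.
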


We have already seen examples of twists which can be written of the above form in Section \ref{S:DriTwis}. If we define $\Theta(x,y)= (e,x)$, then $F_{\Theta}(x,y)=(x,\sigma_{x}(y))$ is bijective by definition, while \eqref{Eq:Theta1} and \eqref{Eq:Theta2} hold in a trivial manner and \eqref{Eq:Theta2} reduces to $ab=\sigma_{a}(b)\gamma_{b}(a)$. With this choice of $\Theta$, we recover the triple $F_{\Theta}(x,y)= ( x,\sigma_{x}(y))$, $\Phi_{\Theta}(x,y,z)=(x, \sigma_{x}(y), \sigma_{\gamma_{y}(x)}(z))$ and $\Psi_{\Theta} (x,y,z)= (x,y,\sigma_{xy}(z))$ of Example \ref{E:Theta}. 
The reader should also note that the inverse twists to this family of twists cannot be written in the form of Corollary \ref{C:G-}. By Theorem \ref{T:SkwGroupoid}, any twist of the mentioned form has an inverse twist $(F',\Phi',\Psi')$ with $F'=F^{-1}$ on the group $(G,\star)$ with the trivial braiding operator $(x,y)\mapsto (y, y^{\star}\star x\star y)$. Due to the trivial braiding operator on $(G,\star, \star)$, the twists coming from Corollary \ref{C:G-} on this skew brace are independent of the choice of $\Theta$ and define the same twist given by $F_{\Theta}(x,y)=(y,x) $. Given this limitation, we can conclude that the general theory Drinfeld twists described in Section \ref{S:DriTwis}, truly goes beyond the study of twist on Hopf algebras coming from matched pairs of groups. 

\section{Concluding Remarks and Outlook}\label{S:Final}
While in \cite{ghobadi2020skew}, we provided the theory which relates skew braces with Hopf algebra objects in a suitable category, $\SL$, the introduction of Drinfeld twists in this work is the first step in advancing this theory by translating Hopf algebraic constructions, such as Drinfeld twists, into constructions which are applicable to skew braces in a compatible way. There are a number of other constructions on CQHAs which should have fruitful applications to the study of skew braces, such as the theory of co-double bosonisation \cite{aziz2019co}. The studies in Sections \ref{S:SL} and \ref{S:Groups}, also fully demonstrate the advantage of applying Hopf algebraic techniques to general Hopf algberas in $\SL$ and observing the effect on their remnants compared to studying these technique on Hopf algebras coming from matched pairs of groups. 

There are several questions which remain open with regards to the discussion of Drinfeld twists on skew brace. The first natural question is whether every twist on a skew brace can be extended to a co-twist on a CQHA in $\SL$, which has this skew brace as its remnant. The second part of this question is whether every twist can be extended canonically to a co-twist on the $\SL$-FRT algebra $H_{\omega_{m}}$ related to the skew brace. As we mentioned in Section \ref{S:SL}, more data is needed to define a co-twist on $H_{\omega_{m}}$, but there might be a natural choice for selecting this data. The computational aspects of this theory might also be of interest. By Example \ref{E:Theta}, any skew brace can be obtained as a twist from the trivial skew brace of its additive group. Hence, one could write an algorithm for to classify twists on such objects and thereby all skew braces with the same additive group. It is not however clear whether such an algorithm or variations of it could have a computational advantage to the algorithm presented in Section 5 of \cite{guarnieri2017skew}.
\bibliographystyle{plain}
\bibliography{Hopf}

\end{document}